\definecolor{black}{rgb}{0.0, 0.0, 0.0}
\definecolor{red}{rgb}{1.0, 0.5, 0.5}
\title[From kinetic Cucker-Smale to pressureless Euler]{A rigorous derivation from the kinetic Cucker-Smale model to the pressureless Euler system\\ with nonlocal alignment}
\author[Figalli]{Alessio Figalli}
\address[Alessio Figalli]{\newline ETH Z\"urich, Department of Mathematics, \newline R\"amistrasse 101, 8092 Z\"urich, Switzerland.}
\email{alessio.figalli@math.ethz.ch}
\author[Kang]{Moon-Jin Kang}
\address[Moon-Jin Kang]{\newline Sookmyung Women's University, Department of Mathematics \& Research Institute of Natural Sciences, \newline 100, Cheongpa-ro 47-gil, Yongsan-gu, Seoul, 04310, Korea }
\email{moonjinkang@sookmyung.ac.kr}
\newtheorem{theorem}{Theorem}[section]
\newtheorem{proposition}[theorem]{Proposition}
\newtheorem{lemma}[theorem]{Lemma}
\theoremstyle{definition}
\newtheorem{remark}[theorem]{Remark}
\newcommand{\bbr}{\mathbb R}
\newcommand{\bbt} {\mathbb T}
\numberwithin{equation}{section}
\numberwithin{figure}{section}
\newcommand{\beq}{\begin{equation}}
\newcommand{\eeq}{\end{equation}}
\newcommand{\bsp}{\begin{split}}
\newcommand{\esp}{\end{split}}
\def\eps{\varepsilon }
\newcommand\adots{\mathinner{\mkern2mu\raise1pt\hbox{.}
\mkern3mu\raise4pt\hbox{.}\mkern1mu\raise7pt\hbox{.}}}
\renewcommand{\div}{{\rm div}}
\def\charf {\mbox{{\text 1}\kern-.30em {\text l}}}
\begin{document}
%%%%%%%%%%%%%%%%

\date{\today}

\subjclass{35Q70, 35B25} \keywords{hydrodynamic limit, kinetic Cucker-Smale model, local alignment, pressureless Euler system, relative entropy, Wasserstein distance}

\thanks{\textbf{Acknowledgment.} 
The work of A. Figalli is supported by the ERC Grant ``Regularity and Stability in Partial Differential Equations (RSPDE)''. The work of M.-J. Kang was supported by the NRF-2017R1C1B5076510 and Sookmyung Women's University Research Grants (1-1703-2045).
}

\begin{abstract}
We consider the kinetic Cucker-Smale model with local alignment as a mesoscopic description for the flocking dynamics. 
The local alignment was first proposed by Karper, Mellet and Trivisa \cite{K-M-T-3}, as a singular limit of a normalized non-symmetric alignment introduced by Motsch and Tadmor \cite{M-T-1}. The existence of weak solutions to this model is obtained in \cite{K-M-T-3}. The time-asymptotic flocking behavior is shown in this article. Our main contribution is to provide a rigorous derivation from mesoscopic to macroscopic description for the Cucker-Smale flocking models. More precisely, we prove the hydrodynamic limit of the kinetic Cucker-Smale model with local alignment towards the pressureless Euler system with nonlocal alignment, under a regime of strong local alignment. Based on the relative entropy method, a main difficulty in our analysis comes from the fact that the entropy of the limit system has no strict convexity in terms of density variable. To overcome this, we combine relative entropy quantities with the 2-Wasserstein distance.
\end{abstract}
\maketitle \centerline{\date}

\tableofcontents

\section{Introduction}
\setcounter{equation}{0}
This article is mainly devoted to providing a rigorous justification on hydrodynamic limit of the kinetic Cucker-Smale model to the pressureless Euler system with nonlocal alignment force. In \cite{C-S}, Cucker and Smale introduced an agent-based model capturing flocking  phenomenon observed within the complex systems such as a flock of birds, school of fish and swarm of insects. The Cucker-Smale (CS) model has received a extensive attention in the mathematical community as well as the physics, biology, engineering and
social science, etc. (See for instance \cite{CCDW,CCR,C-F-R-T,DFT,FHT,HJNXZ,HLSX,H-T,PS,Z-E-P} and the refereces therein.) In \cite{M-T-1}, Motsch and Tadmor proposed a modified Cucker-Smale model by replacing the original CS alignment by a normalized non-symmetric alignment. In \cite{K-M-T-3}, Karper, Mellet, and Trivisa proposed a new kinetic flocking model as a combination of the CS alignment and a local alignment interaction, where the latter was obtained as a singular limit of the non-symmetric alignment introduced by Motsch and Tadmor.

In this article, we consider the kinetic flocking model without Brownian noise, proposed by Karper, Mellet and Trivisa in \cite{K-M-T-1} on $\bbt^d\times\bbr^d$:
\begin{align}
\begin{aligned} \label{main}
&\partial_t f + v\cdot\nabla_x f + \nabla_v\cdot( L[f] f) + \nabla_v\cdot ((u-v)f) =0,\\
&L[f](t,x,v) = \int_{\bbt^d}\int_{\bbr^{d}} \psi(x-y) f(t,y,w) (w-v) \,dw\,dy,\\
&u(t,x)=\frac{\int_{\bbr^d} v f  dv}{\int_{\bbr^d}  f dv},\quad \|f(0)\|_{L^1(\bbt^d\times \bbr^d)}=1.
\end{aligned}
\end{align}
Here $\psi:{\bbt^d}\to \bbr^d$ is a Lipschitz communication weight that is positive and symmetric, i.e., $\psi(x-y)=\psi(y-x)$. The term $\nabla_v\cdot( L[f] f)$ describes a nonlocal alignment due to the original Cucker-Smale flocking mechanism, while the last term $\nabla_v\cdot ((u-v)f)$ describes a local alignment interaction, because of the averaged local velocity $u$. The global existence of weak solutions to \eqref{main} has been proved in \cite{K-M-T-1}. The flocking behaviors of \eqref{main}, however, are not studied so far. We here provide its time-asymptotic behavior. 

As a mesoscopic description, the kinetic model \eqref{main} is posed in $(t,x,v)\in\bbr\times\bbt^{d}\times\bbr^d$, i.e., in $2d+1$ dimensions. This feature provides a accurate description for a significant number of particles. However, its numerical test is very costly with respect to an associated macroscopic description. Hence, it is very important to find a suitable parameter regime on which the complexity of \eqref{main} is reduced. \\

The main goal of this article is to show a singular limit of \eqref{main} in a regime of strong local alignment:
\begin{align}
\begin{aligned} \label{vlasov}
&\partial_t f^{\eps} + v\cdot\nabla_x f^{\eps} + \nabla_v\cdot( L[f^{\eps}] f^{\eps}) +  \frac{1}{\eps}\nabla_v\cdot ((u^{\eps}-v)f^{\eps}) =0,\\
&L[f^\eps](t,x,v) = \int_{\bbt^d}\int_{\bbr^{d}} \psi(x-y) f^{\eps}(t,y,w) (w-v) \,dw\,dy,\\
&u^{\eps}=\frac{\int_{\bbr^d} v f^{\eps}  dv}{\int_{\bbr^d}  f^{\eps} dv},\\
& f^{\eps}|_{t=0} = f^{\eps}_0,\quad \|f_0^\eps\|_{L^1(\bbt^d\times \bbr^d)}=1.
\end{aligned}
\end{align}
As $\eps\to0$, it is expected that the solution $f^{\eps}$ of \eqref{vlasov} converges, in some weak sense, to a mono-kinetic distribution\footnote{In this paper we will use the symbol $\otimes$ in two different contexts: if $\mu$ is a measure on a complete metric space $X$, and $\{\nu_x\}_{x\in X}$ is a family of measures on a complete metric space $Y$, then
$\nu_x\otimes\mu$ denotes the measure on $X\times Y$ defined as
$$
\int_{X\times Y}\varphi\,d[\nu_x\otimes \mu]=
\int_{X}\biggl(\int_Y\varphi(x,y)\,d\nu_x(y)\biggr)\,d\mu(x)\qquad \forall\,\varphi\in C_c(X\times Y).
$$
When $\nu_x$ is independent of $x$ (that is, $\nu_x=\nu$ for all $x$), we use the more standard notation $
\mu\otimes \nu$ (instead of $\nu\otimes \mu$, as done before) to denote the product measure:
$$
\int_{X\times Y}\varphi\,d[\mu\otimes \nu]=
\int_{X}\biggl(\int_Y\varphi(x,y)\,d\nu(y)\biggr)\,d\mu(x)\qquad \forall\,\varphi\in C_c(X\times Y).
$$
Finally, if $a,b\in \bbr^d$ are vectors, then $a\otimes b$ denotes the $d\times d$-matrix with entries
$$
(a\otimes b)_{ij}=a_ib_j\qquad \forall\, i,j=1,\ldots,d.
$$
The meaning will always be clear from the context.}
\beq\label{mono-ki}
\delta_{v=u(t,x)}\otimes\rho(t,x).
\eeq
Here, $\delta_{v=u(t,x)}$ denotes a Dirac mass in $v$ centered on $u(t,x)$.
Also, as we shall explain later, at least formally $\rho$ and $u$ should solve the associated limit system given by the pressureless Euler system with nonlocal flocking dissipation:
\begin{align}
\begin{aligned} \label{PE}
&\partial_t \rho + \nabla\cdot (\rho u) = 0,  \\
&\partial_t (\rho u) + \nabla \cdot (\rho u\otimes u) = \int_{\bbt^d} \psi(x-y)\rho(t,x)\rho(t,y) (u(t,y) - u(t,x)) \, dy,\\
&\rho|_{t=0} = \rho_0,\quad u |_{t=0} = u_0,\quad \|\rho_0\|_{L^1(\bbt^d)}=1.
\end{aligned}
\end{align}
The main difficulty  in the justification of this limit comes from the singularity of the mono-kinetic distribution. To the best of our knowledge, there is no general method to handle the hydrodynamic limit from some kinetic equations to the pressureless Euler systems, no matter what regime is considered. Indeed, there are few results on this kinds of limit, see \cite{JR,K18,KV1} (see also \cite{Jab} for a general treatment of similar regimes that lead to Dirac formation and pressureless gases equations). 

It is worth mentioning that the pressureless Euler system without the nonlocal alignment has been used for the formation of large-scale structures in astrophysics and the aggregation of sticky particles \cite{SSZ,Ze}.  For more theoretical studies on the pressureless gases, we for example refer to \cite{Bouchut,BJ,Boudin,BG,HW,PR,WRS}.

The macroscopic flocking model \eqref{PE} or its variants have been formally derived under a mono-kinetic ansatz \eqref{mono-ki}, and studied in various topics (see for example \cite{DKRT,H-K-K1,H-K-K2,HHW,TT}).   In \cite{H-K-K1}, the authors have shown the global well-posedness of \eqref{PE} with a suitably smooth and small initial data, and the time-asymptotic flocking behavior. In \cite{H-K-K2}, the authors dealt with a moving boundary problem of \eqref{PE} with compactly supported initial density. We also refer to \cite{HHW} on a reformulation of \eqref{PE} into hyperbolic conservation laws with damping in one dimension.

In \cite{K-M-T-2}, the author have shown the hydrodynamic limit of the kinetic flocking model \eqref{main} with Brownian motion, that is, Vlasov-Fokker-Planck type equation, under the regime of strong local alignment and strong Brownian motion:
\begin{align}
\begin{aligned} \label{model-1}
\partial_t f^{\eps} + v\cdot\nabla_x f^{\eps} + \nabla_v\cdot(L[f^\eps] f^{\eps} ) +  \frac{1}{\eps}\nabla_v\cdot ((u^{\eps}-v)f^{\eps}) -\frac{1}{\eps}\Delta_v f^{\eps} =0.
\end{aligned}
\end{align}
In this case, as $\eps\to 0$, $f^{\eps}$ converges to a smooth local equilibrium given by a local Maxwellian, contrary to \eqref{mono-ki}. There, the authors used the relative entropy method, heavily relying on a strict convexity of the entropy of the isothermal Euler system (as a limit system of \eqref{model-1}):
\begin{align*}
\begin{aligned}
&\partial_t \rho + \nabla\cdot (\rho u) = 0,  \\
&\partial_t (\rho u) + \nabla \cdot (\rho u\otimes u)+\nabla \rho = \int_{\bbt^d} \psi(x-y)\rho(t,x)\rho(t,y) (u(t,y) - u(t,x)) \, dy.
\end{aligned}
\end{align*}
The relative entropy method based on a strict convex entropy has been successfully used to prove the hydrodynamic limit of Vlasov-Fokker-Planck type equations, we refer to \cite{BV,C-C-K,GJV,MV,Va}.

On the other hand, the pressureless Euler system \eqref{PE} has a convex entropy given by 
\beq\label{entropy-11}
\eta(\rho, \rho u)=\rho\frac{|u|^2}{2},
\eeq
which is not strictly convex with respect to $\rho$. For this reason, the associated relative entropy \eqref{entropy-11} is not enough to control the convergence of the nonlocal alignment term
(compare with \cite{KV1}, where the nonlocal alignment is not present).
%provide a norm of convergences on the hydrodynamic density $\rho^{\eps}:=\int_{\bbr^d}f^{\eps} dv$ towards $\rho$.
To overcome this difficulty, we first estimate a $L^2$-distance of characteristics generated by vector fields $u^{\eps}$ and $u$, that controls the 2-Wasserstein distance of densities, and then combine the estimates of the relative entropy and the $L^2$-distance of characteristics.

As a related work on \eqref{model-1}, we refer to \cite{C-C-K}, where the authors studied the flocking behavior and hydrodynamic limit of a  coupled system of \eqref{model-1} and fluid equations via drag force.\\

The rest of this paper is organized as follows. In Section 2, we mention different scales of Cucker-Smale models from microscopic level to macroscopic level, and then specify some known existence results on the two descriptions \eqref{main} and \eqref{PE}. In Section 3, we present our main theorem on the hydrodynamic limit, and collect some useful results on the relative entropy method and the optimal transportation theory that are used in the proof of the main theorem. In Section 4, we present some structural hypotheses to guarantee hydrodynamic limit in a general setting. Then we apply the general result to our systems by verifying the hypotheses in Section \ref{sect:proof thm}. In the Appendix, we provide the proof of the long time-asymptotic flocking dynamics and the existence of mono-kinetic solutions for the kinetic model \eqref{main}.

\section{Various scales of Cucker-Smale models}
\setcounter{equation}{0}
In this section we first present various scales of Cucker-Smale models, from microscopic level to macroscopic level. Then we state some known results on global existence of weak solutions to the kinetic description \eqref{main}, and local existence of smooth solutions to the limit system \eqref{PE}. Those results are crucially used in the proof of the main theorem. Finally, in Theorem \ref{prop-flocking}, we present the time-asymptotic flocking behavior of the kinetic model \eqref{main}.

\subsection{Variants of Cucker-Smale models}
In this subsection, we briefly present the kinetic CS model and its variants.  Cucker and Smale in \cite{C-S} proposed a mathematical model to explain the flocking phenomenon: 
\begin{align} \label{CS}
\begin{aligned}
\frac{dx_i}{dt} &= v_i, \quad i =1, \cdots, N, \\
\frac{dv_i}{dt} &= \frac{1}{N} \sum_{j=1}^{N} \psi(x_j - x_i) (v_j - v_i),
\end{aligned}
\end{align}
where $x_i, v_i \in \bbr^d$ denote the spatial position and velocity of the $i$-th particle for an ensemble of $N$ self-propelled particles. The kernel $\psi(|x_j - x_i|)$ is a communication weight given by
\begin{equation} \label{comm}
\psi(x_j - x_i) = \frac{\lambda}{(1 + |x_j - x_i|^2)^{\beta}}, \quad \beta \geq 0,~ \lambda>0.
\end{equation}
The system \eqref{CS} with \eqref{comm} was used as an analytical description of the Vicsek model \cite{Vicsek} without resorting to the first principle of physics.

When the number of particles is sufficiently large, the ensemble of particles can be described by the one-particle density function $f = f(t,x,v)$ at the spatial-velocity position $(x, v) \in \bbr^d \times \bbr^d$ at time $t$. Then, the evolution of $f$ is governed by the following  Vlasov type equation:
\begin{align}
\begin{aligned} \label{KCS}
& \partial_t f + v \cdot \nabla_x f + \nabla_{v} \cdot (L[f] f) = 0,\\
%& F[f](x,\xi,t) = -K_1 \int_{\bbr^{2d}} \psi(|x-y|) (\xi- \xi_*) f(y,\xi_*,t) d\xi_* dy.
&L[f](t,x,v) = \int_{\bbr^{2d}} \psi(x-y) f(t,y,w) (w-v) \,dw\,dy.
\end{aligned}
\end{align}
This was first introduced by Ha and Tadmor \cite{H-T} using the BBGKY Hierarchy from the particle CS model \eqref{CS}. A rigorous mean-field limit was given in \cite{H-L}.\\

In \cite{M-T-1}, Motsch and Tadmor recognized a drawback of the CS model \eqref{CS}, which is due to the normalization factor $\frac{1}{N}$. More precisely, when a small group of agents are located far away from a much larger group of agents, the internal dynamics of the small group is almost halted since the total number of agents is relatively very large. To solve this issue, they replaced the nonlocal alignment $L[f]$ by a normalized non-symmetric alignment operator:
\[
\overline L[f](t,x,v) := \frac{\int_{\bbr^{2d}} K^{r}(x-y) f(t,y,w)(w-v)\,dw\,dy}{\int_{\bbr^{2d}} K^{r}(x-y) f(t,y,w)\,dw\,dy},
\]
where the kernel $K^r$ is a communication weight and $r$ denotes the radius of influence of $K^r$. \\
In \cite{K-M-T-3}, the authors considered the case when the communication weight is extremely concentrated nearby each agent, so that the alignment term $\overline L[f]$ corresponds to a short-range interaction. More precisely, they rigorously justified the singular limit $r\rightarrow 0$, i.e., as $K^r$ converges to the Dirac distribution $\delta_0$, in which case $\overline L[f]$
converge to a local alignment term:
\[
\overline L[f](t,x,v) \rightarrow \frac{\int_{\bbr^{d}} f(t,x,w)(w-v)\,dw}{\int_{\bbr^{d}} f(t,x,w)\,dw}= u(t,x)-v,
\] 
where $u(t,x)$ denotes the averaged local velocity defined as $u(t,x)=\frac{\int_{\bbr^d} v f(t,x,v)  dv}{\int_{\bbr^d} f(t,x,v)dv}$. Hence, their new model became \eqref{main}, which consists of two kinds of alignment force: a nonlocal alignment due to the original CS model, plus a local alignment.\\

\subsection{Existence of weak solutions to \eqref{vlasov}} In \cite{K-M-T-1}, the authors showed the existence of weak solutions to the kinetic Cucker-Smale model with local alignment, noise, self-propulsion, and friction:
\begin{align}
\begin{aligned} \label{KMT}
&\partial_t f + v\cdot\nabla_x f +\nabla_v\cdot (L[f] f)+ \nabla_v\cdot ((u-v)f)\\
&\qquad = \sigma\Delta_v f -  \nabla_v\cdot ((a-b|v|^2)vf) ,\\
&L[f] = \int_{\bbr^{2d}} \psi(x-y) f(t,y,w) (w-v) \,dw\,dy,
\end{aligned}
\end{align}
where the kernel $\psi$ is the same as \eqref{vlasov} and $a$, $b$, and $ \sigma$ are nonnegative constants. By their result applied with  $a=b= \sigma=0$ inside the periodic domain $\bbt^d$, 
we obtain existence of solutions for \eqref{vlasov}. To precisely state such existence result, we need to define a (mathematical) entropy $\mathcal{F}(f^{\eps})$ and kinetic dissipations $ \mathcal{D}_1 (f^{\eps})$, $\mathcal{D}_2(f^{\eps})$ for \eqref{vlasov}:
\begin{align}
\begin{aligned}\label{FDD} 
&\mathcal{F}(f^{\eps}):=\int_{\bbr^d} \frac{|v|^2}{2} f^{\eps} dv,\\
& \mathcal{D}_1(f^{\eps}) : = \int_{\bbt^d\times \bbr^d} f^{\eps} |u^{\eps}-v|^2 \,dv\,dx,\\
&  \mathcal{D}_2(f^{\eps}) : = \frac{1}{2}\int_{\bbt^{2d}\times \bbr^{2d}} \psi(x-y) f^{\eps}(x,v) f^{\eps}(y,w) |v-w|^2 \,dx\,dy\,dv\,dw.
\end{aligned}
\end{align}
\begin{proposition}\label{prop-weak} 
For any $\eps>0$, assume that $f^{\eps}_0$ satisfies 
\beq\label{weak-initial}
f^{\eps}_0\ge0,\quad f^{\eps}_0 \in L^{1}\cap L^{\infty}(\bbr^{2d}),\quad |v|^2 f^{\eps}_0 \in L^{1}(\bbr^{2d}).
\eeq
Then there exists a weak solution $f^{\eps}\ge 0$ of \eqref{vlasov} such that
\begin{align}
\begin{aligned} \label{weak-reg}
& f^{\eps}\in C(0,T; L^1(\bbr^{2d}))\cap L^{\infty}((0,T)\times\bbr^{2d}),\\
& |v|^2 f^\eps \in L^{\infty}(0,T; L^1(\bbr^{2d})),
\end{aligned}
\end{align}
and \eqref{vlasov} holds in the sense of distribution, that is, for any $\varphi\in C_c^{\infty} ([0,T)\times\bbr^{2d})$, the weak formulation holds:
\begin{align}
\begin{aligned} \label{weak-form}
&   \int_0^t\int_{\bbr^{2d}} f^{\eps} \Big( \partial_t\varphi + v\cdot\nabla_x\varphi +L[f^{\eps}]\cdot\nabla_v\varphi+\frac{1}{\eps}(u^{\eps}-v) \cdot\nabla_v\varphi \Big)\,dv\,dx\,ds\\
&\qquad +\int_{\bbr^{2d}} f^{\eps}_0\varphi(0,\cdot) \,dv\,dx =0.
\end{aligned}
\end{align}
Moreover, $f^{\eps}$ preserves the total mass and satisfies  the entropy inequality
\begin{align}
\begin{aligned}\label{kinetic-ineq}
&\int_{\bbt^d} \mathcal{F}(f^{\eps})(t)\,dx + \frac{1}{\eps} \int_0^t  \mathcal{D}_1 (f^{\eps})(s)\,ds +  \int_0^t  \mathcal{D}_2 (f^{\eps})(s)\,ds \le \int_{\bbt^d} \mathcal{F}(f^{\eps}_0)\,dx.
\end{aligned}
\end{align}
\end{proposition}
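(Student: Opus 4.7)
My plan is to obtain Proposition~\ref{prop-weak} from the existence theorem of Karper, Mellet and Trivisa in \cite{K-M-T-1}, supplemented by a standard kinetic-energy estimate yielding the entropy inequality. The general model \eqref{KMT} treated in \cite{K-M-T-1} reduces to \eqref{vlasov} when $a=b=\sigma=0$ and the spatial domain is $\bbt^d$; for fixed $\eps>0$ the factor $1/\eps$ in front of the local alignment is a bounded positive constant which does not interfere with their regularization scheme (replace the singular quotient $u^\eps$ by $u^\eps_\alpha=\big(\int v f^\eps\,dv\big)/\big(\alpha+\int f^\eps\,dv\big)$, solve the regularized Vlasov problem by classical methods, then pass $\alpha\to 0$ via velocity averaging). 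The initial data assumption \eqref{weak-initial} matches precisely what is required there, so we immediately obtain a weak solution $f^\eps\ge 0$ with the regularity \eqref{weak-reg} satisfying the distributional formulation \eqref{weak-form}, and conservation of total mass follows from \eqref{weak-form} by an approximation of the test function $\varphi\equiv 1$.

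The entropy inequality \eqref{kinetic-ineq} would then emerge from the formal identity obtained by multiplying \eqref{vlasov} by $|v|^2/2$ and integrating in $(x,v)\in\bbt^d\times\bbr^d$. The transport term drops by the periodic boundary conditions; the Cucker--Smale term, after one integration by parts in $v$ and symmetrization $(x,v)\leftrightarrow(y,w)$ using $\psi(x-y)=\psi(y-x)$, contributes
\begin{equation*}
-\int_{\bbt^d\times\bbr^d} L[f^\eps]\cdot v\,f^\eps\,dv\,dx = -\mathcal{D}_2(f^\eps),
\end{equation*}
while the local alignment term, after integration by parts in $v$ and use of the identity $\rho^\eps u^\eps=\int v f^\eps\,dv$ with $\rho^\eps=\int f^\eps\,dv$, becomes
\begin{equation*}
-\frac{1}{\eps}\int_{\bbt^d\times\bbr^d}(u^\eps-v)\cdot v\,f^\eps\,dv\,dx = \frac{1}{\eps}\mathcal{D}_1(f^\eps).
\end{equation*}
Summing and integrating in time gives \eqref{kinetic-ineq} with equality.

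The principal obstacle is that the test function $|v|^2/2$ is not admissible in \eqref{weak-form}: it is unbounded in $v$, and $u^\eps$ is singular on the vacuum set $\{\rho^\eps=0\}$, so one cannot directly test the limit equation against it. To make the previous paragraph rigorous I would carry out the computation at the level of the regularized solutions $f^\eps_\alpha$ (for which $u^\eps_\alpha$ is smooth) with a smooth velocity cutoff $\chi_R(v)|v|^2/2$, obtain the corresponding entropy equality for the regularized problem, and then pass to the limits $R\to\infty$ and $\alpha\to 0$. The equality degrades to an inequality because both $\mathcal{D}_1$ and $\mathcal{D}_2$ are convex in the natural variables, and therefore weakly lower semicontinuous in the topologies available from the compactness step, while the left-hand side converges by Fatou thanks to the uniform kinetic-energy bound. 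These are exactly the convex-duality arguments already carried out in \cite{K-M-T-1}, so no new technical input is needed beyond invoking their scheme.
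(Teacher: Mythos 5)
Your approach coincides with the paper's: Proposition \ref{prop-weak} is not proved in the paper at all, but is obtained exactly as you propose, by invoking the existence theorem of \cite{K-M-T-1} for the model \eqref{KMT} with $a=b=\sigma=0$ posed on $\bbt^d$ (the entropy inequality \eqref{kinetic-ineq} being part of that cited result), so your sketch of the regularization scheme and of the kinetic-energy identity is consistent with, and more detailed than, what the paper records. One slip worth fixing: with the sign conventions you set up, the symmetrized Cucker--Smale contribution is $-\int_{\bbt^d\times\bbr^d}L[f^\eps]\cdot v\,f^\eps\,dv\,dx=+\mathcal{D}_2(f^\eps)$, not $-\mathcal{D}_2(f^\eps)$, since $(w-v)\cdot v+(v-w)\cdot w=-|v-w|^2$; as written your two displayed ``contributions'' use inconsistent conventions (taken literally they would make the nonlocal alignment a source of kinetic energy), although the identity you ultimately claim, namely $\frac{d}{dt}\int_{\bbt^d}\mathcal{F}(f^\eps)\,dx+\frac{1}{\eps}\mathcal{D}_1(f^\eps)+\mathcal{D}_2(f^\eps)=0$ at the regularized level, is the correct one and does yield \eqref{kinetic-ineq} after passing to the limit as you describe.
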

The entropy inequality \eqref{kinetic-ineq} is crucially used in the proof of Theorem \ref{main thm}.

\subsection{Flocking behavior of the kinetic model \eqref{main}} We now present the time-asymptotic flocking behavior of solutions to the kinetic model \eqref{main}. For that, we define the following two Lyapunov functionals: 
\begin{align*}
\begin{aligned} 
&{\mathcal E}_1(t) := \int_{\bbt^d\times\bbr^d}f(t,x,v) |u(t,x)-v|^2 \,dv\,dx,\\
&{\mathcal E}_2(t) :=\int_{\bbt^{2d}}\rho(t,x)\rho(t,y) |u(t,x) - u(t,y)|^2 \,dx\,dy,
\end{aligned}
\end{align*}
where $\rho(t,x)=\int_{\bbr^d}f(t,x,v) dv$.
We remark that ${\mathcal E}_1$ measures a local alignment, and ${\mathcal E}_2$ measures alignment of the averaged local velocities. Then, for the flocking estimate, we combine the two functionals as follows:
\begin{equation} \label{Lya}
{\mathcal E}(t) := {\mathcal E}_1(t) + \frac{1}{2}{\mathcal E}_2 (t).
\end{equation} 
\begin{theorem} \label{prop-flocking}
Let $f$ be a solution to \eqref{main}. Then, we have the time-asymptotic flocking estimate
\begin{equation}
\label{eq:E}
{\mathcal E}(t) \le {\mathcal E}(0)\exp{\Big(-2\min\{1, \psi_m \} t\Big)},\quad t>0,
\end{equation}
where $\psi_m$ is the minimum communication weight:
\[ \psi_m := \min_{x, y \in \bbt^d } \psi(x- y) > 0. \]
In addition, if $u$ is uniformly Lipschitz continuous on a time interval $[0,T]$, namely $\ell_T:=\sup_{t\in[0,T]}\|\nabla_x u\|_{L^\infty(\mathbb T^d)}<\infty$, then
\begin{equation}
\label{eq:E1}
\mathcal E_1(t)\leq \mathcal E_1(0)e^{2(\ell_T-1)},\quad \forall\,t\in [0,T].
\end{equation}
\end{theorem}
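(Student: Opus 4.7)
The plan is to compute $\tfrac{d}{dt}\mathcal E_1$ and $\tfrac{d}{dt}\mathcal E_2$ separately and then exploit a cancellation that occurs when they are combined into $\mathcal E$. Writing $\tfrac{d}{dt}\mathcal E_1=\int\partial_tf\,|v-u|^2\,dv\,dx-2\int f(v-u)\cdot\partial_tu\,dv\,dx$, the second integral vanishes because $\int f(v-u)\,dv=0$ by the definition of $u$. Substituting the kinetic equation \eqref{main} and integrating by parts in $x$ and $v$ (the cross terms with $u$ again collapse thanks to $\int f(v-u)\,dv=0$), the convective term $v\cdot\nabla_xf$ produces $-2\int T{:}\nabla_xu\,dx$, where $T_{ik}(x):=\int(v_i-u_i)(v_k-u_k)f\,dv$ is the positive semidefinite velocity covariance; the local alignment produces $-2\mathcal E_1$; and symmetrizing $(x,v)\leftrightarrow(y,w)$ in the nonlocal alignment via $\psi(x-y)=\psi(y-x)$ yields
\begin{equation*}
\tfrac{d}{dt}\mathcal E_1=-2\!\int\! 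T{:}\nabla_xu\,dx-2\mathcal D_2+\iint\psi(x-y)\rho(x)\rho(y)|u(x)-u(y)|^2\,dx\,dy-2\mathcal E_1.
\end{equation*}

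For $\mathcal E_2$, conservation of the total momentum $P=\int\rho u\,dx$ (inherited from the anti-symmetry of the Cucker--Smale interaction under the symmetry of $\psi$, and from the fact that $\int (u-v)f\,dv=0$) reduces the functional to $\mathcal E_2=2\int\rho|u|^2\,dx-2|P|^2$, so it suffices to differentiate $\int\rho|u|^2\,dx$. Taking the first velocity moment of \eqref{main} gives the macroscopic momentum balance $\partial_t(\rho u)+\nabla_x{\cdot}(\rho u\otimes u)+\nabla_x{\cdot}T=\int\psi(x-y)\rho(x)\rho(y)(u(y)-u(x))\,dy$; coupling this with the continuity equation $\partial_t\rho+\nabla_x{\cdot}(\rho u)=0$ and symmetrizing once more in $x\leftrightarrow y$ produces
\begin{equation*}
\tfrac{d}{dt}\tfrac{\mathcal E_2}{2}=2\!\int\! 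T{:}\nabla_xu\,dx-\iint\psi(x-y)\rho(x)\rho(y)|u(x)-u(y)|^2\,dx\,dy.
\end{equation*}
Adding the two identities, both the Reynolds-stress term $\int T{:}\nabla_xu\,dx$ and the $\psi|u(x)-u(y)|^2$ term cancel, leaving the clean dissipation $\tfrac{d}{dt}\mathcal E=-2\mathcal D_2-2\mathcal E_1$.

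For \eqref{eq:E} I then use the algebraic identity, obtained by expanding $|v-w|^2$ around $(u(x),u(y))$ and noting that the cross terms annihilate after integration against $f$,
\begin{equation*}
\mathcal D_2=\int K(x)\!\int\!f|v-u|^2\,dv\,dx+\tfrac12\iint\psi(x-y)\rho(x)\rho(y)|u(x)-u(y)|^2\,dx\,dy,\quad K(x):=\!\int\psi(x-y)\rho(y)\,dy\ge\psi_m,
\end{equation*}
which gives $\mathcal D_2\ge\psi_m\mathcal E_1+\tfrac{\psi_m}{2}\mathcal E_2=\psi_m\mathcal E$. Hence $\tfrac{d}{dt}\mathcal E\le -2\psi_m\mathcal E-2\mathcal E_1\le -2\min\{1,\psi_m\}\mathcal E$ (since both $\psi_m-\min\{1,\psi_m\}\ge0$ and $\mathcal E_1\ge0$), and Gronwall delivers \eqref{eq:E}.

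For \eqref{eq:E1}, I work directly with the $\mathcal E_1$ identity. Regrouping, $-2\mathcal D_2+\iint\psi(x-y)\rho(x)\rho(y)|u(x)-u(y)|^2\,dx\,dy=-2\int K(x)\!\int\!f|v-u|^2\,dv\,dx\le 0$, so the nonlocal part has a favorable sign; meanwhile the positive semidefiniteness of $T$ gives $|T{:}\nabla_xu|\le\|\nabla_xu\|_{L^\infty}\mathrm{tr}(T)$ pointwise and hence $|{-2\int T{:}\nabla_xu\,dx}|\le 2\ell_T\int\mathrm{tr}(T)\,dx=2\ell_T\mathcal E_1$. This yields $\tfrac{d}{dt}\mathcal E_1\le 2(\ell_T-1)\mathcal E_1$, and Gronwall on $[0,T]$ gives \eqref{eq:E1}. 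The substantive point of the proof is the observation that combining $\mathcal E_1$ with $\tfrac12\mathcal E_2$ cancels the Reynolds-stress obstruction in the convective term: this is precisely why \eqref{eq:E} needs no regularity of $u$, whereas the sharper dissipation of $\mathcal E_1$ alone in \eqref{eq:E1} is forced to absorb that term through the Lipschitz bound $\ell_T$.
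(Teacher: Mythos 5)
Your proof is correct and follows essentially the same strategy as the paper: differentiate $\mathcal E_1$ and $\mathcal E_2$, exploit the cancellation of the stress term $\int P:\nabla_x u\,dx$ in the combination $\mathcal E_1+\tfrac12\mathcal E_2$, and use $\psi\ge\psi_m$ to close the Gronwall argument (and the same $2(\ell_T-1)$ differential inequality for \eqref{eq:E1}). The only differences are cosmetic refinements — you keep the nonlocal alignment contribution as an exact identity $\tfrac{d}{dt}\mathcal E=-2\mathcal D_2-2\mathcal E_1$ where the paper only uses the one-sided bound $\mathcal I_2\le0$ (your version even yields the slightly sharper rate $e^{-2\psi_m t}$), and you shortcut the $\mathcal E_2$ computation via conservation of total momentum rather than differentiating the double integral directly.
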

\begin{proof}
We postpone the proof to the Appendix.
\end{proof}

\begin{remark}
As an interesting consequence of \eqref{eq:E1}
one obtains that, for smooth solutions, $\mathcal E_1(0)=0$ implies that $\mathcal E_1(t)=0$ for all $t \in [0,T]$. In other words, mono-kinetic initial conditions remain mono-kinetic as long as the velocity field is Lipschitz. One can note that mono-kinetic solutions to \eqref{main} simply correspond to solutions of the pressureless Euler system \eqref{PE}, hence the short time existence of Lipschitz solutions is guaranteed by Proposition \ref{prop-strong} and Remark \ref{rmk:lip} below.
\end{remark}
\subsection{Formal derivation of the hydrodynamic Cucker-Smale system \eqref{PE}}
We consider the hydrodynamic variables $\rho^{\eps}:=\int_{\bbr^d}f^{\eps} dv$ and $\rho^{\eps}u^{\eps}:=\int_{\bbr^d}vf^{\eps} dv$. \\
First of all, integrating \eqref{vlasov} with respect to  $v$, we get the continuity equation:
\[
\partial_t\rho^{\eps} + \nabla_x \cdot (\rho^{\eps} u^{\eps})=0.
\] 
Multiplying \eqref{vlasov} by $v$, and then integrating it with respect to  $v$, we have
\[
\partial_t(\rho^{\eps}u^{\eps}) + \nabla_x \cdot \Big(\int_{\bbr^d} v\otimes vf^{\eps} dv\Big)  =\int_{\bbt^d} \psi(x-y)\rho^{\eps}(t,x)\rho^{\eps}(t,y) (u^{\eps}(t,y) - u^{\eps}(t,x))\,dy,
\] 
where we used $u^{\eps}=\frac{\int_{\bbr^d} v f^{\eps}  dv}{\int_{\bbr^d}  f^{\eps} dv}$.\\
Then, we rewrite the system for $\rho^{\eps}$ and $u^{\eps}$ as
\begin{align}
\begin{aligned} \label{nonclosed}
&\partial_t\rho^{\eps} + \nabla_x \cdot (\rho^{\eps} u^{\eps})=0,\\
&\partial_t(\rho^{\eps}u^{\eps}) + \nabla_x \cdot (\rho^{\eps} u^{\eps} \otimes  u^{\eps} +P^{\eps})  =\int_{\bbt^d} \psi(x-y)\rho^{\eps}(t,x)\rho^{\eps}(t,y) (u^{\eps}(t,y) - u^{\eps}(t,x))\,dy,\\
\end{aligned}
\end{align}
where $P^{\eps}$ is the stress tensor given by
\[
P^{\eps}:= \int_{\bbr^d} (v-u^{\eps})\otimes (v-u^{\eps}) f^{\eps} dv.
\]
If we take $\eps\to 0$ in \eqref{vlasov}, the local alignment term $\nabla_v\cdot ((u^{\eps}-v)f^{\eps})$ converges to $0$.
Hence, if $\rho^{\eps}\to\rho$ and $\rho^{\eps}u^{\eps}\to \rho u$ for some limiting functions $\rho$ and $u$, we have that $f^{\eps}\to \delta_{v=u}\otimes\rho$ (in some suitable sense).
Hence, the stress tensor $P^{\eps}$ should vanish in the limit, since
\[
\int_{\bbr^d} (v-u)\otimes (v-u) \delta_{v=u}\rho\,dv=0.
\]
Therefore, at least formally, the limit quantities $\rho$ and $u$ satisfy the pressureless Euler system with nonlocal alignment:
\begin{align*}
\begin{aligned} 
&\partial_t\rho + \nabla_x \cdot (\rho u)=0,\\
&\partial_t(\rho u) + \nabla_x \cdot (\rho u\otimes u)  =\int_{\bbt^d} \psi(x-y)\rho(t,x)\rho(t,y) (u(t,y) - u(t,x))\,dy.\\
\end{aligned}
\end{align*}

\subsection{Existence of classical solutions to \eqref{PE}} 
We present here the local existence of classical solutions to the pressureless Euler system \eqref{PE}.  

\begin{proposition}\label{prop-strong} 
Assume that 
\beq\label{strong-initial}
\rho_0> 0\quad \mbox{in}~ \bbt^d\quad\mbox{and}\quad (\rho_0, u_0)\in H^{s}(\bbt^d)\times H^{s+1}(\bbt^d)\quad\mbox{for}~s>\frac{d}{2}+1.
\eeq
Then, there exists $T_*>0$ such that \eqref{PE} has a unique classical solution $(\rho, u)$ satisfying
\begin{align}
\begin{aligned}\label{sol-limit}
& \rho\in C^0([0,T_*];H^{s}(\bbt^d))\cap C^1([0,T_*];H^{s-1}(\bbt^d)),\\
&u\in C^0([0,T_*];H^{s+1}(\bbt^d))\cap C^1([0,T_*];H^{s}(\bbt^d)).
\end{aligned}
\end{align}
\end{proposition}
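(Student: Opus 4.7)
The strategy is to decouple the system \eqref{PE} by exploiting the positivity of $\rho_0$. For smooth solutions with $\rho > 0$, expanding $\partial_t(\rho u) + \nabla \cdot (\rho u \otimes u)$ and using the continuity equation, the momentum equation reduces to the Burgers-type equation
\begin{equation*}
\partial_t u + (u \cdot \nabla) u = K[\rho, u], \qquad K[\rho, u](x) := \int_{\bbt^d} \psi(x-y)\rho(y)(u(y)-u(x))\,dy,
\end{equation*}
coupled to $\partial_t \rho + \nabla \cdot (\rho u) = 0$. Thus, as long as $\rho > 0$, the problem is equivalent to a continuity equation for $\rho$ and a Burgers-type equation for $u$ with a nonlocal forcing $K[\rho,u]=\psi*(\rho u)-u(\psi*\rho)$ that is smoothed by convolution with $\psi$.

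Next, I would set up a Picard iteration: given $(\rho^n, u^n)$ with $\rho^n > 0$, define $(\rho^{n+1}, u^{n+1})$ as the solution of the linear problems
\begin{align*}
\partial_t \rho^{n+1} + \nabla \cdot (\rho^{n+1} u^n) &= 0, \\
\partial_t u^{n+1} + (u^n \cdot \nabla) u^{n+1} &= K[\rho^n, u^n],
\end{align*}
with initial data $(\rho_0, u_0)$. Both are linear first-order equations solvable by the method of characteristics; since $u^n \in H^{s+1} \hookrightarrow C^2$ by Sobolev embedding (using $s > d/2 + 1$), its flow is well defined and $C^1$, which allows one to construct $\rho^{n+1} \in C([0,T];H^s)$ and $u^{n+1} \in C([0,T];H^{s+1})$ at each step.

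The core of the proof lies in deriving uniform a priori bounds in $C([0,T_*]; H^s) \times C([0,T_*]; H^{s+1})$. Applying $\partial^\alpha$ with $|\alpha| \le s+1$ to the equation for $u^{n+1}$ and using Kato--Ponce/Moser commutator estimates for the transport term, one obtains
\begin{equation*}
\frac{d}{dt}\|u^{n+1}\|_{H^{s+1}}^2 \le C\,\|\nabla u^n\|_{L^\infty}\|u^{n+1}\|_{H^{s+1}}^2 + C\,\|K[\rho^n,u^n]\|_{H^{s+1}}\|u^{n+1}\|_{H^{s+1}},
\end{equation*}
together with an analogous estimate for $\|\rho^{n+1}\|_{H^s}$ involving $\|u^n\|_{H^{s+1}}$. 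Young's convolution inequality (moving derivatives onto $\psi$) bounds the forcing by a polynomial in $\|\rho^n\|_{H^s}$ and $\|u^n\|_{H^{s+1}}$. By induction, these yield a uniform lifespan $T_* > 0$ depending only on the initial norms. Convergence follows by showing $(\rho^n, u^n)$ is Cauchy in a weaker norm (e.g., $C([0,T_*]; L^2 \times H^1)$) via an energy estimate on differences, and the Sobolev regularity of the limit is recovered by lower semicontinuity combined with the uniform bounds. Time regularity $C^1([0,T_*]; H^{s-1} \times H^s)$ follows directly from the equations.

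Uniqueness is obtained by a standard $L^2$-type energy estimate on the difference of two solutions, exploiting the convexity of $|v|^2$. Positivity of $\rho$ on $[0,T_*]$ is preserved since $\rho$ evolves by transport along the (Lipschitz) flow of $u$ up to the bounded factor $e^{-\int \nabla\cdot u\,dt}$, so the decoupled reformulation remains valid throughout. The \emph{main obstacle} I foresee is controlling $K[\rho,u]$ in $H^{s+1}$ while $\rho$ is only in $H^s$: the term $u\,(\psi * \rho)$ demands one more derivative than $\rho$ possesses, forcing derivatives to be transferred onto $\psi$ via integration by parts. This requires $\psi$ to be smoother than merely Lipschitz (e.g., $\psi \in H^{s+1}(\bbt^d)$), which is a hypothesis implicitly stronger than the one stated in the introduction but is natural in the smooth regime considered here. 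Beyond this point, the commutator estimates and Gronwall bookkeeping are technically routine.
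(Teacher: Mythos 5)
The paper does not actually prove Proposition \ref{prop-strong}: it is quoted from the reference \cite{H-K-K1}, where it is established by essentially the argument you outline. Your reduction is the standard (and correct) one: for $\rho>0$ the momentum equation divides through by $\rho$ to give $\partial_t u+(u\cdot\nabla)u=\psi*(\rho u)-u\,(\psi*\rho)$ coupled to the continuity equation, after which a Picard iteration on the linearized transport equations, Kato--Ponce/Moser energy estimates at the level $H^s\times H^{s+1}$, contraction in a weaker norm, and preservation of the lower bound on $\rho$ along the flow deliver local well-posedness. The embedding $H^{s+1}\hookrightarrow C^2$ for $s>\frac d2+1$, the algebra property of $H^s$, and the $L^2\times H^1$ uniqueness estimate are all used correctly.

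The one point where your analysis goes astray is the claimed \emph{main obstacle}: the forcing does \emph{not} require $\psi$ to be smoother than Lipschitz. To bound $K[\rho,u]$ in $H^{s+1}$ you only ever need to transfer a \emph{single} derivative onto $\psi$, not $s+1$ of them. Indeed, for $|\alpha|=s+1$ write $\alpha=e_i+\beta$ with $|\beta|=s$; then $\partial^\alpha(\psi*\rho)=(\partial_i\psi)*(\partial^\beta\rho)$, and Young's inequality on the compact torus gives
\begin{equation*}
\|\partial^\alpha(\psi*\rho)\|_{L^2}\le \|\partial_i\psi\|_{L^1(\bbt^d)}\,\|\partial^\beta\rho\|_{L^2}\le C\,\|\nabla\psi\|_{L^\infty}\,\|\rho\|_{H^s},
\end{equation*}
so $\psi*\rho\in H^{s+1}$ already for Lipschitz $\psi$ and $\rho\in H^s$ (the same computation, with $\Lambda^s$ in place of $\partial^\beta$, covers non-integer $s$). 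The same applies to $\psi*(\rho u)$, since $\rho u\in H^s$ by the algebra property, and then $u\,(\psi*\rho)\in H^{s+1}$ because $H^{s+1}$ is an algebra. So the proposition holds under exactly the stated hypothesis on $\psi$, and no strengthening is needed. With that correction, your sketch is a faithful, self-contained version of the cited proof.
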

\begin{remark}
\label{rmk:lip}
Since $s>\frac{d}{2}+1$, by Sobolev inequality it follows that $(\rho, u)\in C^1([0,T_*]\times \bbt^d)$.
\end{remark}
Proposition \ref{prop-strong} has been proven in \cite{H-K-K1}. There, the authors obtained also a global well-posedness of classical solutions, provided an initial datum is suitably smooth and small.

\section{Main result and Preliminaries}
In this section, we first present our main result on the hydrodynamic limit of \eqref{vlasov}. We next present useful results on the relative entropy method and the optimal transportation theory, which are used as main tools in the next section.
\subsection{Main result} 
For the hydrodynamic limit, we consider a well-prepared initial data $f^{\eps}_0$ satisfying \eqref{weak-initial} and
\begin{itemize}
\item
(${\mathcal A}1$): $\int_{\bbt^d} \int_{\bbr^d} \Big(f^{\eps}_0\frac{|v|^2}{2} - \rho_0\frac{|u_0|^2}{2} \Big)
\,dv\, dx =\mathcal{O}(\eps),$
\vspace{0.2cm}
\item
(${\mathcal A}2$): $\|\rho^{\eps}_0 - \rho_0\|_{L^1(\bbt^d)}=\mathcal O(\eps)$,
\vspace{0.2cm}
\item
(${\mathcal A}3$): $ \|u^{\eps}_0 - u_0\|_{L^{\infty}(\bbt^d)} =\mathcal{O}({\eps})$.
\end{itemize}

We now specify our main result on the hydrodynamic limit.
\begin{theorem}\label{main thm} 
Assume that the initial data $f^{\eps}_0$ and $(\rho_0, u_0)$ satisfy \eqref{weak-initial}, \eqref{strong-initial}, and $({\mathcal A}1)$-$({\mathcal A}3)$.
Let $f^{\eps}$ be a weak solution to \eqref{vlasov} satisfying \eqref{kinetic-ineq}, and $(\rho, u)$ be a local-in-time smooth solution to \eqref{PE} satisfying \eqref{sol-limit} up to the time $T_*$. Then, there exists a positive constant $C_*$ (depending on $T_*$) such that, for all $t\le T_*$,
\begin{align}
\begin{aligned}\label{main-ineq}
\int_{\bbt^d}\rho^{\eps} (t) |(u^{\eps} - u)|^2(t)\, dx +W^2_2 (\rho^{\eps}(t),\rho(t)) \le C_*\eps,
\end{aligned}
\end{align}
where $\rho^{\eps}=\int_{\bbr^d} f^{\eps} dv$, $\rho^{\eps}u^{\eps} = \int_{\bbr^d} v f^{\eps} dv,$ and $W_2$ denotes the 2-Wasserstein distance.\\
Therefore, we have
\beq\label{w-conv}
f^{\eps} \rightharpoonup  \delta_{v=u(t,x)}\otimes \rho(t,x)\quad \quad \mbox{in} ~\mathcal{M}((0,T_*)\times\bbt^d\times\bbr^d),
\eeq
where $\mathcal{M}((0,T_*)\times\bbt^d\times\bbr^d)$ is the space of nonnegative Radon measures on $(0,T_*)\times\bbt^d\times\bbr^d$.
%Therefore, we have a convergence of the kinetic model \eqref{vlasov} towards the limit system \eqref{PE} in the sense of distributions.
\end{theorem}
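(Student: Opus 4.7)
The plan is to combine a modulated kinetic energy (relative entropy) estimate with a Lagrangian estimate of the 2-Wasserstein distance, then close the loop by a Gronwall argument on the sum. I introduce the modulated energy
$$
\mathcal H^\eps(t) := \int_{\bbt^d\times\bbr^d} f^\eps\,\frac{|v-u(t,x)|^2}{2}\,dv\,dx,
$$
and note the identity $2\mathcal H^\eps(t)=\int\rho^\eps|u^\eps-u|^2\,dx+\int f^\eps|v-u^\eps|^2\,dv\,dx$, so $\mathcal H^\eps$ controls the first term in \eqref{main-ineq}. Expanding $\frac{|v-u|^2}{2}=\frac{|v|^2}{2}-v\cdot u+\frac{|u|^2}{2}$, I compute $\frac{d}{dt}\mathcal H^\eps$ from three ingredients: (i) the entropy inequality \eqref{kinetic-ineq}, contributing $-\frac{1}{\eps}\mathcal D_1(f^\eps)-\mathcal D_2(f^\eps)$; (ii) the weak formulation \eqref{weak-form} applied with the test function $\varphi(t,x,v)=v\cdot u(t,x)$ (truncated by a cutoff $\chi_R(v)$ and then sent $R\to\infty$ using the $|v|^2 f^\eps$ integrability in \eqref{weak-reg}), observing that $\int f^\eps(u^\eps-v)\cdot u\,dv=0$ kills the $1/\eps$ contribution; (iii) the continuity equation $\partial_t\rho^\eps+\nabla\cdot(\rho^\eps u^\eps)=0$ against $|u|^2/2$. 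After substituting $\partial_t u=-(u\cdot\nabla)u+\int\psi(x-y)\rho(y)(u(y)-u(x))\,dy$ from \eqref{PE}, and decomposing $v=u^\eps+(v-u^\eps)$ so that first-order $v$-moments drop, the algebra should reduce to
$$
\frac{d}{dt}\mathcal H^\eps \le -\frac{1}{\eps}\mathcal D_1(f^\eps) + C\|\nabla u\|_{L^\infty}\mathcal H^\eps + \mathcal R^\eps,
$$
where $\mathcal R^\eps$ collects nonlocal alignment mismatches of the form $\iint\psi(x-y)\bigl[\rho^\eps(x)\rho^\eps(y)-\rho(x)\rho(y)\bigr]\Phi(x,y)\,dx\,dy$ with $\Phi$ bounded Lipschitz (built from $u$ and $u^\eps-u$).

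Because $\psi$, $\rho$, and $u$ are Lipschitz on $\bbt^d$, the Kantorovich--Rubinstein duality together with a Cauchy--Schwarz splitting of the density difference yields $|\mathcal R^\eps|\le \delta\,\mathcal H^\eps + C_\delta W_2^2(\rho^\eps,\rho)$ for any $\delta>0$. To control $W_2$ I pass to characteristics. Let $X(t,x)$ denote the classical flow of $u$, well defined by Proposition \ref{prop-strong} and Remark \ref{rmk:lip}. The Ambrosio superposition principle, applied to the continuity equation $\partial_t\rho^\eps+\nabla\cdot(\rho^\eps u^\eps)=0$ (whose flux is $L^\infty_t L^1_x$), produces a probability measure on absolutely continuous curves $\gamma\in C([0,T_*];\bbt^d)$ whose time marginals are $\rho^\eps(t)$ and whose a.e.\ tangent is $u^\eps(t,\gamma(t))$, providing a Lagrangian map $X^\eps(t,\cdot)$ with $X^\eps(t,\cdot)_\# \rho_0^\eps = \rho^\eps(t)$. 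Taking an optimal coupling $\mu_0\in\Pi(\rho_0^\eps,\rho_0)$ (so that $\int|x-y|^2\,d\mu_0\le C\eps$ by $(\mathcal A_2)$ and the boundedness of $\bbt^d$) and defining
$$
Z^\eps(t) := \int|X^\eps(t,x)-X(t,y)|^2\,d\mu_0(x,y) \ge W_2^2(\rho^\eps(t),\rho(t)),
$$
I differentiate in time, add and subtract $u(t,X^\eps)$, and use $\|\nabla u\|_{L^\infty}<\infty$ on $[0,T_*]$ to obtain
$$
\frac{d}{dt}Z^\eps \le 2\|\nabla u\|_{L^\infty}Z^\eps + 2\sqrt{Z^\eps}\,\Bigl(\int\rho^\eps|u^\eps-u|^2\,dx\Bigr)^{1/2} \le CZ^\eps + C\mathcal H^\eps,
$$
after applying Young's inequality.

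Summing the two differential inequalities and setting $\Phi^\eps(t):=\mathcal H^\eps(t)+Z^\eps(t)$, I obtain $\frac{d}{dt}\Phi^\eps \le C\Phi^\eps$ on $[0,T_*]$, where the $-\frac{1}{\eps}\mathcal D_1$ term is discarded (or, if needed, used to absorb subleading cross terms involving $\int f^\eps|v-u^\eps|^2$). Assumption $(\mathcal A_1)$ together with $(\mathcal A_2)$--$(\mathcal A_3)$ gives $\Phi^\eps(0)=\mathcal O(\eps)$ (indeed $\mathcal H^\eps(0)$ is controlled after writing $|v|^2=|v-u_0|^2+2v\cdot u_0-|u_0|^2$ and using $(\mathcal A_1)$ for the kinetic energy excess and $(\mathcal A_3)$ for the velocity difference; $W_2^2(\rho_0^\eps,\rho_0)\le C\eps$ on $\bbt^d$ follows from $(\mathcal A_2)$). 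Gronwall then delivers $\Phi^\eps(t)\le C_*\eps$ on $[0,T_*]$, which is precisely \eqref{main-ineq}. The weak convergence \eqref{w-conv} is a direct consequence: $\int f^\eps|v-u|^2\,dv\,dx\to 0$ forces $f^\eps$ to concentrate on the graph $\{v=u(t,x)\}$, while $W_2(\rho^\eps,\rho)\to 0$ pins down the $x$-marginal.

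The main obstacle I anticipate is the rigorous handling of the Lagrangian map $X^\eps$: $u^\eps$ is only $L^\infty$ and is moreover defined only where $\rho^\eps>0$, so classical Cauchy--Lipschitz or even DiPerna--Lions theory does not apply. The Ambrosio superposition principle circumvents this by producing a probability measure on trajectories without any regularity requirement on $u^\eps$; crucially, in the Gronwall step for $Z^\eps$ only $\nabla u$ (not $\nabla u^\eps$) enters, because the ``bad'' velocity $u^\eps$ is absorbed into $\mathcal H^\eps$ via Cauchy--Schwarz. A secondary technical point is justifying the weak-formulation step with the linearly growing test $v\cdot u(t,x)$, resolved by the $\chi_R$ cutoff and dominated convergence using \eqref{weak-reg}.
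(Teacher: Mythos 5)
Your architecture coincides with the paper's: a modulated-energy estimate, a Lagrangian bound on $W_2^2(\rho^\eps,\rho)$ via Ambrosio's superposition principle (needed precisely because $u^\eps$ has no Lipschitz regularity), and a Gronwall argument on the sum. The differences are cosmetic and both work: you use the full kinetic modulated energy $\int f^\eps|v-u|^2/2$ where the paper uses the hydrodynamic relative entropy $\int\rho^\eps|u^\eps-u|^2/2$ and tracks the excess $\int f^\eps|v-u^\eps|^2\,dv\,dx\ge0$ separately; and you couple the superposition measure for $\rho^\eps$ with the flow of $u$ through an optimal plan between $\rho_0^\eps$ and $\rho_0$ at time $0$, where the paper couples trajectories issued from the same point distributed as $\rho_0^\eps$ and then compares $X(t)_\#\rho_0^\eps$ with $\rho(t)$ using $(\mathcal A2)$ and the elementary bound $W_2^2\le\frac{d}{8}\|\cdot\|_{L^1}$ on the torus (which you also need, and implicitly invoke, to get $Z^\eps(0)=\mathcal O(\eps)$ from an $L^1$ hypothesis).

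The one step where the sketch, as written, would not close is the reduction of the nonlocal alignment terms to your remainder $\mathcal R^\eps$. After testing the momentum balance against $u$ and substituting $\partial_t u$ from \eqref{PE}, the alignment contribution to $\frac{d}{dt}\mathcal H^\eps$ contains, besides terms carrying a density difference, the genuinely quadratic term
\[
\widetilde{\mathcal D}_2(f^\eps):=\frac12\int_{\bbt^{2d}}\psi(x-y)\,\rho^\eps(x)\rho^\eps(y)\,|u^\eps(x)-u^\eps(y)|^2\,dx\,dy,
\]
which enters with a plus sign, is merely $O(1)$, is not of the form $\iint\psi\,[\rho^\eps\rho^\eps-\rho\rho]\,\Phi$, and cannot be absorbed into $\delta\,\mathcal H^\eps+C_\delta W_2^2$. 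It must be cancelled against the kinetic dissipation $-\mathcal D_2(f^\eps)$ furnished by \eqref{kinetic-ineq}, and this requires the additional (elementary but indispensable) comparison $\widetilde{\mathcal D}_2(f^\eps)\le\mathcal D_2(f^\eps)$, proved by Cauchy--Schwarz in $v$ — this is exactly Lemma \ref{lem-ineq} in the paper. Once that cancellation and a completion of the square (yielding the nonpositive term $-\frac12\iint\psi\,\rho^\eps\rho^\eps|(u^\eps-u)(x)-(u^\eps-u)(y)|^2$) are carried out, the surviving remainder is $\iint\psi(x-y)\rho^\eps(x)\bigl(\rho^\eps(y)-\rho(y)\bigr)\bigl(u(y)-u(x)\bigr)\cdot\bigl(u^\eps(x)-u(x)\bigr)\,dx\,dy$. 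Note that the corresponding $\Phi$ is \emph{not} bounded Lipschitz, contrary to your parenthetical claim, since it contains $u^\eps$; Kantorovich--Rubinstein must therefore be applied only in the $y$-variable, where $\psi(x-\cdot)$ and $\psi(x-\cdot)u(\cdot)$ are Lipschitz uniformly in $x$, while the factor $u^\eps(x)-u(x)$ is handled by Cauchy--Schwarz against $\rho^\eps(x)\,dx$ — which is the mechanism you name. With these two repairs your argument is the paper's.
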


The proof of this result is postponed to Section \ref{sect:proof thm}. In the next subsections we collect some preliminary facts that will be used later in the proof.
\subsection{Relative entropy method} 
First of all, we rewrite the limit system \eqref{PE} in an abstract form using the notation
\begin{align*}
\begin{aligned}
&P=\rho u,\quad U={\rho \choose P},\quad A(U)={ P^T \choose \frac{P\otimes P}{\rho}},\\
 & F(U)={0 \choose  \int_{\bbt^d} \psi(x-y)\rho(t,x)\rho(t,y) (u(t,y) - u(t,x))\,dy}.
\end{aligned}
\end{align*}
Then we can rewrite \eqref{PE} as the balance law
\beq\label{system}
\partial_t U + {\rm div}_x A(U) = F(U).
\eeq
We consider the relative entropy and relative flux:
\begin{align}
\begin{aligned}\label{relative}
&\eta(V|U)=\eta(V)-\eta(U)- D\eta(U)\cdot(V-U),\\
&{A}(V|U)={A}(V)-{A}(U)- D{A}(U)\cdot(V-U),
\end{aligned}
\end{align}
where $D{A}(U)\cdot(V-U)$ is a matrix defined as
\[
(D{A}(U)\cdot(V-U))_{ij} = \sum_{k=1}^{d+1} \partial_{U_k}{A}_{ij}(U)(V_k-U_k),\quad 1\le i\le d+1,\quad 1\le j\le d.
\]
By the theory of conservation laws, the system \eqref{system} has a convex entropy $\eta(U)=\rho\frac{|u|^2}{2}$ with entropy flux $G$ given by the identity:
\[
\partial_{U_i}  G_{j} (U) = \sum_{k=1}^{d+1}\partial_{U_k} \eta(U) \partial_{U_i}  A_{kj} (U),\quad 1\le i\le d+1,\quad 1\le j\le d.
\]
Since $\eta(U)=\frac{|P|^2}{2\rho}$, and
\beq\label{D-eta}
D\eta(U) ={D_{\rho}\eta \choose D_P \eta}= { -\frac{|P|^2}{2\rho^2} \choose  \frac{P}{\rho}} = { -\frac{|u|^2}{2} \choose  u},
\eeq
for given $V={q \choose qw}$, $U={\rho \choose \rho u}$, we have
\begin{align}
\begin{aligned}\label{relative-e}
\eta(V|U)&=\frac{q}{2}|w|^2 -\frac{\rho}{2}|u|^2 +\frac{|u|^2}{2} (q-\rho) - u (qw-\rho u)\\
&=\frac{q}{2}|u-w|^2.
\end{aligned}
\end{align}

The next proposition provides a cornerstone to verify the hydrodynamic limit through the relative entropy method. For its proof, we refer to the proof of Proposition 4.2 in \cite{K-M-T-2} (See also \cite{Va}).
\begin{proposition}\label{prop-key2} 
Let $U$ be a strong solution to a balance law \eqref{system}, and $V$ any smooth function. Then, the following holds:
\begin{align*}
\begin{aligned} 
&\frac{d}{dt}\int_{\bbt^d}\eta(V|U)\,dx =\frac{d}{dt}\int_{\bbt^d}\eta(V)\,dx-\int_{\bbt^d} \nabla_x\bigl(D\eta(U)\bigr): A(V|U)\,dx \\ 
&\qquad  -\int_{\bbt^d} D\eta(U) \cdot[\partial_tV + {\rm div}_x A(V) - F(V)]\,dx\\
&\qquad  -\int_{\bbt^d} \big[ D^2\eta(U) F(U) (V-U) +D\eta(U)F(V) \big]\,dx.
\end{aligned}
\end{align*}
\end{proposition}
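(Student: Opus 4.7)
The plan is a direct chain-rule computation starting from the pointwise identity $\eta(V|U)=\eta(V)-\eta(U)-D\eta(U)\cdot(V-U)$: substitute the balance law $\partial_t U = F(U)-\mathrm{div}_x A(U)$ whenever $\partial_t U$ appears, apply integration by parts on $\bbt^d$ (where boundary terms vanish), and use the entropy/flux compatibility $\partial_{U_i}G_j(U)=\sum_k\partial_{U_k}\eta(U)\,\partial_{U_i}A_{kj}(U)$ to reorganise the result into the claimed form. First, I would differentiate $\eta(V|U)$ in time and apply the product rule on the $D\eta(U)\cdot(V-U)$ piece to obtain the compact identity
\[
\partial_t \eta(V|U) = [D\eta(V)-D\eta(U)]\cdot\partial_t V - D^2\eta(U)\,\partial_t U\cdot(V-U).
\]
Integrating over $\bbt^d$, recognising $\int D\eta(V)\partial_t V\,dx$ as $\tfrac{d}{dt}\int\eta(V)\,dx$, and substituting the equation for $U$ then yields
\[
\tfrac{d}{dt}\!\int\!\eta(V|U)\,dx = \tfrac{d}{dt}\!\int\!\eta(V)\,dx - \!\int\! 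D\eta(U)\partial_t V\,dx - \!\int\! D^2\eta(U) F(U)(V-U)\,dx + \!\int\! D^2\eta(U)\,\mathrm{div}_x A(U)\cdot(V-U)\,dx.
\]

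The crux of the argument is converting the last ``flux term'' into a combination built from $\nabla_x(D\eta(U))$ and $A(V|U)$. For that, I would differentiate the compatibility law once more in $U$ and exploit the symmetry $\partial^2_{U_iU_\ell}G_j=\partial^2_{U_\ell U_i}G_j$ to obtain the tensor identity
\[
\sum_k \partial^2_{U_kU_\ell}\eta(U)\,\partial_{U_i}A_{kj}(U) = \sum_k \partial^2_{U_kU_i}\eta(U)\,\partial_{U_\ell}A_{kj}(U),
\]
which, after contraction against $\partial_{x_j}U$, rewrites the flux term as $\int \nabla_x(D\eta(U)):DA(U)(V-U)\,dx$.

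To finish, I would add and subtract $\int\nabla_x(D\eta(U)):A(V)\,dx$ and $\int\nabla_x(D\eta(U)):A(U)\,dx$ to assemble the three $\nabla_x(D\eta(U))$ terms into $-\int\nabla_x(D\eta(U)):A(V|U)\,dx$; integration by parts on the torus gives $-\int\nabla_x(D\eta(U)):A(V)\,dx=\int D\eta(U)\,\mathrm{div}_x A(V)\,dx$, and the zeroth-order compatibility makes $\int\nabla_x(D\eta(U)):A(U)\,dx = -\int \mathrm{div}_x G(U)\,dx = 0$ vanish. A final addition and subtraction of $\int D\eta(U)F(V)\,dx$ bundles $-\int D\eta(U)\partial_t V\,dx - \int D\eta(U)\mathrm{div}_x A(V)\,dx + \int D\eta(U)F(V)\,dx$ into $-\int D\eta(U)[\partial_t V+\mathrm{div}_x A(V)-F(V)]\,dx$, and the leftover $-\int D\eta(U)F(V)\,dx$ pairs with $-\int D^2\eta(U)F(U)(V-U)\,dx$ to produce the final bracket of the proposition. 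The main obstacle is precisely the differentiated compatibility identity: without its symmetry in $(i,\ell)$ one cannot trade $D^2\eta(U)\,\mathrm{div}_x A(U)\cdot(V-U)$ for the gauge-invariant quantity $\nabla_x(D\eta(U)):DA(U)(V-U)$, and the rest of the assembly, which is routine integration-by-parts bookkeeping on $\bbt^d$, cannot proceed.
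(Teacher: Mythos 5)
Your proof is correct: the compact identity $\partial_t\eta(V|U)=[D\eta(V)-D\eta(U)]\cdot\partial_tV-D^2\eta(U)\partial_tU\cdot(V-U)$, the substitution of \eqref{system}, the differentiated entropy--flux compatibility relation (whose symmetry in $(i,\ell)$ does give $\sum_k\partial^2_{U_kU_\ell}\eta\,\partial_{U_i}A_{kj}=\sum_k\partial^2_{U_kU_i}\eta\,\partial_{U_\ell}A_{kj}$, hence $D^2\eta(U)\,{\rm div}_xA(U)\cdot(V-U)=\nabla_x(D\eta(U)):DA(U)(V-U)$), and the final reassembly via $DA(U)(V-U)=A(V)-A(U)-A(V|U)$ together with $\int_{\bbt^d}\nabla_x(D\eta(U)):A(U)\,dx=-\int_{\bbt^d}{\rm div}_xG(U)\,dx=0$ all check out. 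The paper does not reproduce a proof (it defers to Proposition 4.2 of \cite{K-M-T-2}), but your computation is precisely the standard relative-entropy argument used there, so the approaches coincide.
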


\subsection{Wasserstein distance and representation formulae for solutions of the continuity equation} 
For $p\ge1$, the $p$-Wasserstein distance between two probability measures $\mu_1$
and $\mu_2$ on $\bbr^d$ is defined by
\[
W_p^p(\mu_1,\mu_2):=\inf_{\nu\in\Lambda(\mu_1,\mu_2)}\int_{\bbr^{2d}}|x-y|^{2}\,d\nu(x,y),
\]
where $\Lambda(\mu_1,\mu_2 )$ denotes the set of all probability measures $\nu$ on $\bbr^{2d}$ with marginals $\mu_1$ and $\mu_2$,
i.e, 
\[
\pi_{1\#}\nu = \mu_1,\quad \pi_{2\#}\nu = \mu_2,
\]
where $\pi_1: (x,y)\mapsto x$ and $\pi_2 : (x,y)\mapsto y $ are the natural projections from $\bbr^{d}\times \bbr^{d}$ to $\bbr^{d}$, and $\pi_{\#}\nu$ denotes the push forward of $\nu$ through a map $\pi$, i.e., $\pi_{\#}\nu (B):= \nu(\pi^{-1}(B))$ for any Borel set $B$.
This same definition can be extended to measures on the torus $\bbt^d$ with the understanding that $|x-y|$ denotes the distance on the torus.\\

To make a connection between the $L^2$-distance of velocities and the 2-Wasserstein distance of densities (see Lemma \ref{lem-com}), we will use two different representation formulas for solutions to the continuity equation
\beq\label{continuity}
\partial_t\mu_t + \div_x (u_t\mu_t)=0.
\eeq
Let us recall that, if the velocity field $u_t:\bbr^d\to\bbr^d$ is Lipschitz with respect to  $x$, uniformly in $t$, then for any  $x$ there exists a global-in-time unique characteristic $X$ generated by $u_t$ starting from $x$,\[
\dot X(t,x)=u_t (X (t,x)),\quad X(0,x)=x,
\]
and the solution $\mu_t$ of \eqref{continuity} is the push forward of the initial data $\mu_0$ through $X(t)$, i.e.,
\beq\label{Xode}
\mu_t=X(t)_{\#}\mu_0
\eeq
(e.g., see \cite[Proposition 8.1.8]{A-G-S}).
On the other hand, if the velocity field $u_t$ is not Lipschitz with respect to  $x$, the uniqueness of the characteristics is not guaranteed anymore. Still, a probabilistic representation formula for solutions to \eqref{continuity} holds (recall that a curve of probability measures in $\bbr^d$ is said narrowly continuous if it is continuous in the duality with continuous bounded functions):
\begin{proposition}\label{prop-prob}
For a given $T>0$, let $\mu_t:[0,T]\to \mathcal{P}(\bbr^d)$ be a narrowly continuous solution of \eqref{continuity} for a Borel vector field $u_t$ satisfying 
\[
\int_0^T\int_{\bbr^d} |u_t(x)|^p d\mu_t(x) dt <\infty,\quad \mbox{for some } p>1.
\]
Let $\Gamma_{T}$ denote the space of continuous curves from $[0,T]$ into $\bbr^d$.
Then, there exists a probability measure $\eta$ on $\Gamma_{T}\times \bbr^d$ satisfying the following properties:\\
(i) $\eta$ is concentrated on the set of pairs $(\gamma,x)$ such that $\gamma$ is an absolutely continuous curve solving the ODE 
\[
\dot \gamma(t)=u_t (\gamma (t)),\quad\mbox{for } \mbox{a.e.} ~t\in(0,T),~\mbox{with }\gamma(0)=x;
\]
(ii) $\mu_t$ satisfies 
\[
\int_{\bbr^d} \varphi(x) d\mu_t(x) = \int_{\Gamma_{T}\times\bbr^d} \varphi(\gamma(t)) d\eta(\gamma,x),\quad \forall \,\varphi\in C^0_b (\bbr^d), ~t\in [0,T].
\]
\end{proposition}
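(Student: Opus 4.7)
The plan is to establish Proposition~\ref{prop-prob} via an approximation-and-compactness argument, which is the standard route to the superposition principle (cf.\ \cite{A-G-S}). The idea is to regularize $u_t$ so that characteristics exist classically, lift the resulting flow to a measure on paths, extract a narrow limit, and then verify that the limit measure is concentrated on integral curves of the original velocity field.

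First, I would mollify $u_t$ in $x$ to obtain smooth fields $u^\eps_t$, solve the ODE $\dot X^\eps(t,x)=u^\eps_t(X^\eps(t,x))$ with $X^\eps(0,x)=x$, and let $\mu^\eps_t := X^\eps(t)_{\#}\mu_0$, which solves the regularized continuity equation by \eqref{Xode}. Set
\[
\eta^\eps := \bigl((t\mapsto X^\eps(t,\cdot)),\,\mathrm{id}\bigr)_{\#}\mu_0 \qquad \text{on }\Gamma_T\times\bbr^d,
\]
so that properties (i) and (ii) trivially hold with $(u_t,\mu_t)$ replaced by $(u^\eps_t,\mu^\eps_t)$. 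Next I would prove tightness of $\{\eta^\eps\}$: tightness of the $\bbr^d$-marginal is immediate since it equals $\mu_0$; tightness of the $\Gamma_T$-marginal follows from the uniform equi-integrability bound $\int_0^T\!\!\int_{\bbr^d}|u^\eps_t|^p\,d\mu^\eps_t\,dt\le C$, which via Hölder gives a uniform $(1-1/p)$-Hölder modulus for $\eps$-paths tested against a suitable functional $\Phi\in L^1(d\eta^\eps)$, allowing a Prokhorov-type extraction of a narrowly convergent subsequence $\eta^\eps\rightharpoonup\eta$.

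Then I would identify the limit. Property (ii) follows by passing to the limit in $\int\varphi(\gamma(t))\,d\eta^\eps=\int\varphi\,d\mu^\eps_t$, using stability of the continuity equation under mollification together with the narrow continuity of $\mu_t$. For property (i), the natural strategy is to test, for $\phi\in C^1_c(\bbr^d)$, the functional
\[
\Psi_\phi(\gamma,x) := \phi(\gamma(t))-\phi(x)-\int_0^t \nabla\phi(\gamma(s))\cdot u_s(\gamma(s))\,ds,
\]
show that $\int\Psi_\phi\,d\eta=0$ for every $t$ and every $\phi$ in a countable dense family, and conclude that $\eta$-a.e.\ curve solves $\dot\gamma=u_s(\gamma)$.

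The \emph{main obstacle} is precisely this identification: since $u_t$ is only Borel and $L^p$-integrable against $\mu_t$, the composition $(\gamma,x)\mapsto u_s(\gamma(s))$ is not continuous, so $\Psi_\phi$ is \emph{not} a bounded continuous test function against which narrow convergence may be applied. The remedy I would use is to approximate $u$ in $L^p(d\mu_t\,dt)$ by continuous fields $v^k$, introduce the continuous bounded proxies
\[
\Psi_\phi^k(\gamma,x) := \phi(\gamma(t))-\phi(x)-\int_0^t \nabla\phi(\gamma(s))\cdot v^k_s(\gamma(s))\,ds,
\]
pass $\eps\to 0$ first (where $\Psi_\phi^k$ is genuinely admissible for narrow convergence, yielding $\int\Psi_\phi^k\,d\eta^\eps\to\int\Psi_\phi^k\,d\eta$), and then send $k\to\infty$. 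Control of the remainder $\int_0^t\nabla\phi(\gamma(s))\cdot(u_s-v^k_s)(\gamma(s))\,ds$ uniformly in $\eps$ relies on the key bound $\int_0^T\!\!\int|u_s-v^k_s|^p\,d\mu^\eps_s\,ds\to 0$, which in turn uses the narrow convergence $\mu^\eps_s\to\mu_s$ together with equi-integrability. Once this double limit is executed, $\Psi_\phi\equiv 0$ $\eta$-a.e., property (i) is established, and the proof is complete.
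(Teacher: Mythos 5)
The paper does not prove this proposition at all: it is quoted as a known result (the superposition principle) with a pointer to \cite[Theorem 8.2.1]{A-G-S}, so your attempt is being measured against that reference rather than against an argument in the text. Your overall architecture (regularize, lift the flow to a measure on $\Gamma_T\times\bbr^d$, prove tightness via the uniform $p$-energy bound and Ascoli--Arzel\`a, identify the limit by testing with $\Psi_\phi$ and approximating $u$ by continuous fields) is the right skeleton and matches the classical proof.

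There is, however, a genuine gap in your first step, and it propagates. You propose to mollify $u_t$ in $x$, run the flow $X^\eps$ of the mollified field, and set $\mu_t^\eps:=X^\eps(t)_\#\mu_0$, claiming that (ii) then follows ``by stability of the continuity equation under mollification.'' Two things go wrong. First, $u_t$ is only a Borel field known to lie in $L^p(\mu_t)$; it is defined (and controlled) only $\mu_t$-a.e., so its convolution against Lebesgue measure is not even well defined in general (it may be infinite, and it depends on the arbitrary values of $u_t$ off the support of $\mu_t$), nor is there any reason the mollified field admits a globally defined flow or satisfies the uniform bound $\int_0^T\int|u^\eps_t|^p\,d\mu^\eps_t\,dt\le C$. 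Second, and more fundamentally, even granting a well-behaved $u^\eps_t\to u_t$, the continuity equation with a merely Borel $L^p(\mu_t)$ field is \emph{not} well-posed: the given $\mu_t$ is just one of possibly many solutions emanating from $\mu_0$, so the limit of $X^\eps(t)_\#\mu_0$ --- which is \emph{a} solution --- need not be \emph{the} prescribed $\mu_t$. Your property (ii) is therefore unproved. The standard repair (and the one used in \cite{A-G-S}) is to regularize the \emph{solution} rather than the field: set $\mu_t^\eps:=\mu_t\ast\rho_\eps$ and $E_t^\eps:=(u_t\mu_t)\ast\rho_\eps$, and take $v_t^\eps:=E_t^\eps/\mu_t^\eps$, which is smooth, transports $\mu_t^\eps$ exactly, satisfies $\int|v_t^\eps|^p\,d\mu_t^\eps\le\int|u_t|^p\,d\mu_t$ by Jensen's inequality, and --- crucially --- has $\mu_t^\eps\to\mu_t$ by construction, so the time marginals of the limiting path measure are forced to be $\mu_t$. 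With that substitution your tightness and identification steps (including the $L^p$ approximation of $u$ by continuous fields to handle the non-continuity of $\Psi_\phi$, whose quantitative control also relies on the Jensen inequality above) go through and recover the cited theorem.
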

Again, this result readily extends on the torus.

Note that, in the case when $u_t$ is Lipschitz, there exists a unique curve $\gamma$ solving the ODE and starting from $x$ (i.e., $\gamma=X(\cdot,x)$), so the measure $\eta$ is given by the formula
$$
d\eta(\gamma,x)=\delta_{\gamma=X(\cdot,x)}\otimes d\mu_0(x).
$$
We refer to \cite[Theorem 8.2.1]{A-G-S} for more details and a proof.

\subsection{Useful inequality} We here present a standard inequality that is used in the proof of Lemma \ref{lem-com}, for the convenience of the reader:
\begin{lemma}
\label{lem:W2 L1}
Let $\rho_1,\rho_2:\bbt^d\to \mathbb R$ be two probability densities.
Then
$$
W_2^2(\rho_1,\rho_2)\leq \frac{d}{8}\|\rho_1-\rho_2\|_{L^1(\bbt^d)}.
$$
\end{lemma}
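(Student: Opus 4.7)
The plan is to construct an explicit coupling (transport plan) between $\rho_1$ and $\rho_2$ that leaves the common mass $\rho_1\wedge\rho_2=\min(\rho_1,\rho_2)$ in place, and moves only the excess mass; then the quadratic cost is controlled by the diameter of the torus.

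More precisely, set $\mu:=\min(\rho_1,\rho_2)$ and define the excess parts $\sigma_1:=\rho_1-\mu=(\rho_1-\rho_2)_+$ and $\sigma_2:=\rho_2-\mu=(\rho_1-\rho_2)_-$. These are nonnegative with common total mass
\[
m:=\int_{\bbt^d}\sigma_1\,dx=\int_{\bbt^d}\sigma_2\,dx=\tfrac{1}{2}\|\rho_1-\rho_2\|_{L^1(\bbt^d)}.
\]
(The case $m=0$ is trivial since then $\rho_1=\rho_2$.) Consider the measure on $\bbt^d\times\bbt^d$ given by
\[
\pi:=(\mathrm{Id},\mathrm{Id})_{\#}\mu\;+\;\frac{1}{m}\,\sigma_1\otimes\sigma_2,
\]
which is an admissible transport plan from $\rho_1$ to $\rho_2$ since its marginals are $\mu+\sigma_1=\rho_1$ and $\mu+\sigma_2=\rho_2$.

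The diagonal part of $\pi$ contributes zero cost. For the tensor part, I would bound $|x-y|^2$ pointwise by the squared diameter of $\bbt^d$. With the flat metric on the torus $\bbr^d/\bbz^d$, every point lies within distance $\tfrac{\sqrt{d}}{2}$ of any other, so $|x-y|^2\le d/4$. Therefore
\[
W_2^2(\rho_1,\rho_2)\;\le\;\int_{\bbt^d\times\bbt^d}|x-y|^2\,d\pi(x,y)\;\le\;\frac{d}{4}\cdot\frac{1}{m}\int_{\bbt^{2d}}\sigma_1(x)\sigma_2(y)\,dx\,dy\;=\;\frac{d}{4}\,m\;=\;\frac{d}{8}\|\rho_1-\rho_2\|_{L^1(\bbt^d)},
\]
which is the claim.

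There is essentially no obstacle beyond choosing the right coupling; the only thing to double-check is the numerical value of the squared diameter $d/4$ of $\bbt^d=\bbr^d/\bbz^d$, which is what produces the constant $d/8$.
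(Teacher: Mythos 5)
Your proof is correct and follows essentially the same route as the paper: both keep the common mass $\min\{\rho_1,\rho_2\}$ fixed via the diagonal plan, couple the excess parts by the normalized product measure $\frac{1}{m}\sigma_1\otimes\sigma_2$, and bound the quadratic cost by the squared diameter $d/4$ of $\bbt^d$, yielding the constant $d/8$.
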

\begin{proof}
The idea is simple: to estimate the transportation cost from $\rho_1$ to $\rho_2$ it suffices to consider a transport plan that keeps at rest all the mass in common between $\rho_1$ and $\rho_2$ (namely $\min\{\rho_1,\rho_2\}$)
and sends $\rho_1-\min\{\rho_1,\rho_2\}$ onto $\rho_2-\min\{\rho_1,\rho_2\}$ in an arbitrary way.
For instance, assuming without loss of generality that $\rho_1\neq \rho_2$ (otherwise the result is trivial), we set
$$
m:=\|\rho_1-\min\{\rho_1,\rho_2\}\|_{L^1(\bbt^d)}
=\|\rho_2-\min\{\rho_1,\rho_2\}\|_{L^1(\bbt^d)}
=\frac{1}{2}\|\rho_1-\rho_2\|_{L^1(\bbt^d)}>0.
$$
Then, a possible choice of transport plan between $\rho_1$ and $\rho_2$ is given by 
\begin{multline*}
\pi(dx,dy):=\delta_{x=y}(dy)\otimes { \min\{\rho_1(x),\rho_2(x)\}dx}
\\
+ \frac{1}{m}[\rho_1(x)-\min\{\rho_1(x),\rho_2(x)\}]dx \otimes [\rho_2(y)-\min\{\rho_1(y),\rho_2(y)\}]dy.
\end{multline*}
Since the diameter of $\bbt^d$ is bounded by $\sqrt{d}/2,$ we deduce that the $W_2^2$-cost to transport 
 $\rho_1-\min\{\rho_1,\rho_2\}$ onto $\rho_2-\min\{\rho_1,\rho_2\}$
is at most
\begin{multline*}
\int_{\bbt^{2d}} |x-y|^2d\pi(x,y)\\= 
\frac{1}{m}\int_{\bbt^{2d}} |x-y|^2
(\rho_1(x)-\min\{\rho_1(x),\rho_2(x)\})( \rho_2(y)-\min\{\rho_1(y),\rho_2(y)\})\,dx\,dy\\
\leq 
\frac{d}{4}\,\|\rho_1-\min\{\rho_1,\rho_2\}\|_{L^1(\bbt^d)}=
\frac{d}{8}\,\|\rho_1-\rho_2\|_{L^1(\bbt^d)},
\end{multline*}
as desired.
\end{proof}

\section{Structural lemma}
In a general system, we first present some structural hypotheses to provide a Gronwall-type inequality on the relative entropy that is also controlled by 2-Wasserstein distance. \\
$\bullet$ {\bf Hypotheses:}  Let $f^\eps$ be a solution to a given kinetic equation $KE_\eps$ scaled with $\eps>0$ corresponding to a initial data $f^\eps_0$. Let $U^\eps$ and $U^\eps_0$ consist of hydrodynamic variables of $f^\eps$ and $f^\eps_0$ respectively.\\
Let $U$ be a solution to a balance law (as a limit system of $KE_\eps$):
\[
\partial_t U + {\rm div}_x A(U) = F(U),\quad U|_{t=0}=U_0.
\]
\begin{itemize}
\item
(${\mathcal H}1$): The kinetic equation $KE_\eps$ has a kinetic entropy $\mathcal{F}$ such that $\int \mathcal{F}(f^{\eps})(t)\,dx\ge 0$ and
\[
\int \mathcal{F}(f^{\eps})(t)\,dx + \frac{1}{\eps} \int_0^t  D_1(f^{\eps})(s)\,ds + \int_0^t  D_2(f^{\eps})(s)\,ds \le \int_{\bbt^d} \mathcal{F}(f^{\eps}_0)\,dx,
\]
where $D_1, D_2\ge 0$ are some dissipations.
\vspace{0.2cm}
\item
(${\mathcal H}2$): There exists a constant $C>0$ (independent of $\eps$) such that
\[
\int \eta(U^{\eps}_0|U_0)\,dx \le C\eps,\quad \int \big( \mathcal{F}(f^{\eps}_0)- \eta(U^\eps_0)\big)\,dx\le C\eps,\quad \int_{\bbt^d} \mathcal{F}(f^{\eps}_0)\,dx\le C.
\]
\item
(${\mathcal H}3$): The balance law has a convex entropy $\eta$, and the minimization property holds: 
\[
\eta(U^\eps)\le  \mathcal{F}(f^{\eps}).
\]
\item
(${\mathcal H}4$): There exists a constant $C>0$ (independent of $\eps$) such that
\[
\Big|\int \nabla_x\bigl(D\eta(U)\bigr): A(U^\eps|U)\,dx \Big| \le C\int\eta(U^\eps|U)\,dx. 
\]
\item
(${\mathcal H}5$): There exists a constant $C>0$ (independent of $\eps$) such that
\[
\Big|\int D\eta(U) \cdot[\partial_tU^\eps + {\rm div}_x A(U^\eps) - F(U^\eps)]\,dx  \Big| \le C D_1(f^\eps).
\] 
\item
(${\mathcal H}6$): Let $\rho^\eps$ be the hydrodynamic variable of $f^\eps$ as the local mass, and $\rho$ be the corresponding variable for the balance law. Then,
\begin{align*}
\begin{aligned}
&-\int \big[ D^2\eta(U) F(U) (U^\eps-U) +D\eta(U)F(U^\eps) \big]\,dx\\
&\quad \le D_2(f^\eps) + C W_2^2(\rho^\eps,\rho)+ C\int \eta(U^\eps|U) dx.
\end{aligned}
\end{align*}
\item
(${\mathcal H}7$): There exists a constant $C>0$ (independent of $\eps$) such that 
\[
W_2^2(\rho^\eps,\rho)(t) \le C\int_0^t\int \eta(U^\eps|U) dxds +C\eps.
\] 
\end{itemize}

\begin{remark}
1. The hypotheses (${\mathcal H}1$)-(${\mathcal H}5$) provide a basic structure in applying the relative entropy method to hydrodynamic limits as in previous results (for example, \cite{KV1,K-M-T-2,MV}). 
On the other hand, the hypotheses (${\mathcal H}6$)-(${\mathcal H}7$) provide a crucial connection between the relative entropy and Wasserstein distance.\\
2. The (kinetic) entropy inequality (${\mathcal H}1$) plays an important role in controlling the dissipations $D_1, D_2$ in (${\mathcal H}5$) and (${\mathcal H}6$).\\
3. (${\mathcal H}2$) is related to a kind of well-prepared initial data.
\end{remark}

\begin{lemma}\label{lem-gen}
Assume the hypotheses (${\mathcal H}1$)-(${\mathcal H}7$). Then, for a given $T>0$, there exists a constant $C>0$  such that
\[
\int \eta(U^\eps|U)(t) dx + W_2^2(\rho^\eps,\rho) (t) \le C\eps,\quad t\le T.
\]
\end{lemma}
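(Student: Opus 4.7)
The plan is to apply the relative entropy identity of Proposition~\ref{prop-key2} with $V=U^\eps$, integrate in time, and dispose of the resulting terms systematically using the hypotheses, producing an integral inequality that couples $\int\eta(U^\eps|U)\,dx$ with $W_2^2(\rho^\eps,\rho)$ and can be closed via Gr\"onwall. Concretely, I would first rewrite
\begin{equation*}
\int_{\bbt^d}\eta(U^\eps|U)(t)\,dx = \int_{\bbt^d}\eta(U^\eps_0|U_0)\,dx + \Bigl[\int_{\bbt^d}\eta(U^\eps)(t)\,dx - \int_{\bbt^d}\eta(U^\eps_0)\,dx\Bigr] + I_1(t) + I_2(t) + I_3(t),
\end{equation*}
where $I_1,I_2,I_3$ denote the three time-integrated remainders on the right-hand side of Proposition~\ref{prop-key2}. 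The bracketed piece is exactly where the kinetic dissipations enter the hydrodynamic estimate.

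Chaining $(\mathcal{H}3)$ with the kinetic entropy inequality $(\mathcal{H}1)$ bounds that bracket by $\int(\mathcal{F}(f^\eps_0)-\eta(U^\eps_0))\,dx - \tfrac{1}{\eps}\int_0^t D_1(f^\eps)\,ds - \int_0^t D_2(f^\eps)\,ds$, whose first summand is at most $C\eps$ by $(\mathcal{H}2)$. Integrating $(\mathcal{H}4)$ in time gives $|I_1|\le C\int_0^t\!\int_{\bbt^d}\eta(U^\eps|U)\,dx\,ds$; integrating $(\mathcal{H}5)$ gives $|I_2|\le C\int_0^t D_1(f^\eps)\,ds$, which is itself $\le C\eps$ because $(\mathcal{H}1)$--$(\mathcal{H}2)$ force $\tfrac{1}{\eps}\int_0^t D_1\le C$; and integrating $(\mathcal{H}6)$ gives $I_3\le \int_0^t D_2(f^\eps)\,ds + C\int_0^t W_2^2(\rho^\eps,\rho)(s)\,ds + C\int_0^t\!\int_{\bbt^d}\eta(U^\eps|U)\,dx\,ds$. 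The $\pm\!\int_0^t D_2$ contributions cancel, the leftover $-\tfrac{1}{\eps}\int_0^t D_1$ is nonpositive and can be discarded, and absorbing $\int\eta(U^\eps_0|U_0)\,dx\le C\eps$ from $(\mathcal{H}2)$ yields
\begin{equation*}
\int_{\bbt^d}\eta(U^\eps|U)(t)\,dx \le C\eps + C\int_0^t\!\int_{\bbt^d}\eta(U^\eps|U)(s)\,dx\,ds + C\int_0^t W_2^2(\rho^\eps,\rho)(s)\,ds.
\end{equation*}

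To close the loop I would plug the pointwise-in-time estimate of $(\mathcal{H}7)$ into the Wasserstein integral, so that $\int_0^t W_2^2\,ds\le CT\int_0^t\!\int\eta(U^\eps|U)\,dx\,ds + CT\eps$, and arrive at
\begin{equation*}
\int_{\bbt^d}\eta(U^\eps|U)(t)\,dx \le C_T\eps + C_T\int_0^t\!\int_{\bbt^d}\eta(U^\eps|U)(s)\,dx\,ds,\qquad t\in[0,T].
\end{equation*}
Gr\"onwall's inequality then delivers $\int\eta(U^\eps|U)(t)\,dx\le C_*\eps$ uniformly in $t\le T$, and a final application of $(\mathcal{H}7)$ provides the matching bound $W_2^2(\rho^\eps,\rho)(t)\le C_*\eps$; summing the two yields the claim.

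The hard part is structural rather than computational. Because $\eta(U^\eps|U)=\tfrac{\rho^\eps}{2}|u^\eps-u|^2$ degenerates in the density variable, as stressed in the introduction, the relative entropy alone carries no direct information on $\rho^\eps-\rho$ and the classical relative-entropy argument cannot close on its own. Hypothesis $(\mathcal{H}7)$ is tailored precisely to repair this by exporting the velocity-type control into a genuine Wasserstein bound on densities, but the price is that $W_2^2$ and the relative entropy must be estimated jointly, which is why the Gr\"onwall step above is run on the coupled pair rather than on the relative entropy in isolation. The only remaining subtlety is the dissipation bookkeeping: the two negative kinetic reservoirs $-\tfrac{1}{\eps}\int_0^t D_1$ and $-\int_0^t D_2$ supplied by $(\mathcal{H}1)$ are exactly what is needed to neutralize the positive $D_1$ and $D_2$ contributions produced by $(\mathcal{H}5)$ and $(\mathcal{H}6)$.
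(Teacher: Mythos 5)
Your proposal is correct and follows essentially the same route as the paper: the identity of Proposition \ref{prop-key2} integrated in time, the splitting of $\int\eta(U^\eps)(t)-\int\eta(U^\eps_0)$ via $(\mathcal{H}3)$, $(\mathcal{H}1)$, $(\mathcal{H}2)$, the cancellation of the $D_2$ contributions between the kinetic entropy inequality and $(\mathcal{H}6)$, the bound $\int_0^t D_1\le C\eps$ to handle the $(\mathcal{H}5)$ term, and the substitution of $(\mathcal{H}7)$ to close the Gr\"onwall argument on the coupled quantity. No discrepancies with the paper's proof.
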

\begin{proof}
First of all, using Proposition \ref{prop-key2}, we have
\[
\int_{\bbt^d}\eta(U^\eps|U)(t)\,dx \le I_1+I_2+I_3+I_4+I_5,
\]
\begin{align*}
\begin{aligned} 
&I_1:=\int_{\bbt^d}\eta(U^\eps_0|U_0)\,dx,\\
&I_2:=\int_{\bbt^d}\big(\eta(U^\eps)(t)-\eta(U^\eps_0)\big)\,dx,\\
&I_3:=-\int_0^t \int_{\bbt^d} \nabla_x\bigl(D\eta(U)\bigr): A(U^\eps|U)\,dxds, \\ 
&I_4:= -\int_0^t \int_{\bbt^d} D\eta(U) \cdot[\partial_tU^\eps + {\rm div}_x A(U^\eps) - F(U^\eps)]\,dxds,\\
&I_5:=  -\int_0^t \int_{\bbt^d} \big[ D^2\eta(U) F(U) (U^\eps-U) +D\eta(U)F(U^\eps) \big]\,dxds.
\end{aligned}
\end{align*}
It follows from (${\mathcal H}2$) that $I_1\le C\eps$.\\
We decompose $I_2$ as
\begin{align}
\begin{aligned} \label{I-2}
 I_2 =  \underbrace{\int_{\bbt^d}(\eta(U^{\eps})(t) - \mathcal{F}(f^{\eps})(t))\,dx}_{=:I_2^1} + \underbrace{\int_{\bbt^d}( \mathcal{F}(f^{\eps})(t)- \mathcal{F}(f^{\eps}_0))\,dx}_{=:I_2^2}
  + \underbrace{\int_{\bbt^d}( \mathcal{F}(f^{\eps}_0)- \eta(U^{\eps}_0))\,dx}_{=:I_2^3}.
\end{aligned}
\end{align}
First, $I_2^1\le0$ by (${\mathcal H}3$).\\
Since (${\mathcal H}1$) yields 
\[
I_2^2\le -\int_0^t D_2(f^\eps) ds,
\]
it follows from (${\mathcal H}6$) that
\[
I_2^2+I_5 \le C\int_0^t W_2^2(\rho^\eps,\rho)ds + C\int_0^t \int_{\bbt^d} \eta(U^\eps|U) dx ds.
\]
By (${\mathcal H}2$), $I_2^3\le C\eps$.\\
It follows from (${\mathcal H}4$) that 
\[
I_3\le C\int_0^t \int_{\bbt^d} \eta(U^\eps|U) dx ds.
\]
Since (${\mathcal H}1$) and (${\mathcal H}2$) imply
\[
 \int_0^t  D_1(f^{\eps})(s)\,ds \le C\eps,
\]
we have $I_4 \le C\eps$.\\
Therefore, we have
\[
\int \eta(U^\eps|U)(t) dx \le C\eps + C\int_0^t \Big[ \int \eta(U^\eps|U)(s) dxds +  W_2^2(\rho^\eps,\rho) \Big] ds.
\]
Hence, combining it with (${\mathcal H}7$), and using Gronwall's inequality, we have the desired result. 
\end{proof}

\section{Proof of Theorem \ref{main thm}}
\label{sect:proof thm} 
The main part of the proof consists in proving the estimate \eqref{main-ineq}. 
\subsection{Proof of \eqref{main-ineq}} 
This will be done by verifying the hypotheses (${\mathcal H}1$)-(${\mathcal H}7$), and then completed by Lemma \ref{lem-gen}.

\subsubsection{\bf Verification of (${\mathcal H}1$):} (${\mathcal H}1$) is satisfied thanks to Lemma \ref{lem-ineq} below. There we show that one can replace the nonlocal dissipation $\mathcal{D}_2$ in the kinetic entropy inequality \eqref{kinetic-ineq} by another dissipation $\tilde {\mathcal{D}}_2$ defined in terms of the hydrodynamic variables $\rho^{\eps}$ and $u^{\eps}$.
\begin{lemma}\label{lem-ineq} 
For any $\eps>0$, assume that $f^{\eps}_0$ satisfies 
\[
f^{\eps}_0 \in L^{1}\cap L^{\infty}(\bbt^d\times\bbr^d),\quad |v|^2 f^{\eps}_0 \in L^{1}(\bbt^d\times\bbr^d).
\]
Then the weak solution $f^{\eps}$ in Proposition \ref{prop-weak} also satisfies
\begin{align}
\begin{aligned} \label{vlasov-e}
&\int_{\bbt^d} \mathcal{F}(f^{\eps})(t)\,dx + \frac{1}{\eps} \int_0^t  \mathcal{D}_1(f^{\eps})(s)\,ds + \int_0^t  \tilde{\mathcal{D}}_2(f^{\eps})(s)\,ds \le \int_{\bbt^d} \mathcal{F}(f^{\eps}_0)\,dx,
\end{aligned}
\end{align}
where $\mathcal{F}$ and $\mathcal{D}_1$ as in \eqref{FDD}, and
\[
\tilde{\mathcal{D}}_2(f^{\eps}) :=  \frac{1}{2}  \int_{\bbt^{2d}} \psi(x-y) \rho^{\eps}(t,x) \rho^{\eps}(t,y) |u^{\eps}(t,x)-u^{\eps}(t,y)|^2 \,dx\,dy.
\] 
\end{lemma}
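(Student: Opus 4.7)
The plan is to deduce \eqref{vlasov-e} directly from the entropy inequality \eqref{kinetic-ineq} by showing the pointwise-in-time dissipation bound
$$\tilde{\mathcal{D}}_2(f^\eps)(t)\leq \mathcal{D}_2(f^\eps)(t)\qquad\text{for a.e. }t>0.$$
Once this is established, since both $\mathcal{D}_1$ and $\mathcal{D}_2$ enter \eqref{kinetic-ineq} with a positive sign, replacing $\mathcal{D}_2$ by the smaller quantity $\tilde{\mathcal{D}}_2$ preserves the inequality, giving exactly \eqref{vlasov-e}.

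The comparison between the two dissipations will be proved through an algebraic decomposition based on the fundamental identity
$$\int_{\bbr^d} f^\eps(t,x,v)\bigl(v-u^\eps(t,x)\bigr)\,dv=0,$$
which holds by the very definition of $u^\eps$. First, using $\rho^\eps(t,z)=\int_{\bbr^d}f^\eps(t,z,\xi)\,d\xi$ for $z\in\{x,y\}$, I would rewrite
$$\tilde{\mathcal{D}}_2(f^\eps) =\frac{1}{2}\int_{\bbt^{2d}\times\bbr^{2d}}\psi(x-y)f^\eps(t,x,v)f^\eps(t,y,w)|u^\eps(t,x)-u^\eps(t,y)|^2\,dx\,dy\,dv\,dw,$$
so that the difference $\mathcal{D}_2-\tilde{\mathcal{D}}_2$ becomes a single integral of $\psi(x-y)f^\eps(t,x,v)f^\eps(t,y,w)$ against the scalar $|v-w|^2-|u^\eps(t,x)-u^\eps(t,y)|^2$.

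Next I would expand $v-w=(v-u^\eps(t,x))+(u^\eps(t,x)-u^\eps(t,y))+(u^\eps(t,y)-w)$ and take the squared norm. This produces three square terms plus three cross terms. Cancelling $|u^\eps(t,x)-u^\eps(t,y)|^2$ leaves
\begin{align*}
|v-w|^2-|u^\eps(t,x)-u^\eps(t,y)|^2
&=|v-u^\eps(t,x)|^2+|u^\eps(t,y)-w|^2\\
&\quad+2(v-u^\eps(t,x))\cdot(u^\eps(t,x)-u^\eps(t,y))\\
&\quad+2(v-u^\eps(t,x))\cdot(u^\eps(t,y)-w)\\
&\quad+2(u^\eps(t,x)-u^\eps(t,y))\cdot(u^\eps(t,y)-w).
\end{align*}
Integrating against $f^\eps(t,x,v)f^\eps(t,y,w)$, each of the three cross terms vanishes: the first two do so upon carrying out the $v$-integration and invoking the orthogonality identity above, and the third does so upon carrying out the $w$-integration. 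The two surviving square terms are manifestly nonnegative and in fact equal
$$\int_{\bbt^{2d}\times\bbr^d}\psi(x-y)\rho^\eps(t,y)f^\eps(t,x,v)|v-u^\eps(t,x)|^2\,dx\,dy\,dv\ge0$$
(plus its symmetric counterpart in $(y,w)$), which yields $\mathcal{D}_2\ge\tilde{\mathcal{D}}_2$.

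I expect no serious obstacle here; the only mild technical point is the justification of the integrations by parts in $v$ and $w$, but the integrability assumptions $|v|^2 f_0^\eps\in L^1$, $f_0^\eps\in L^1\cap L^\infty$, combined with the regularity \eqref{weak-reg} of the weak solution and the Cauchy--Schwarz estimate $|\rho^\eps u^\eps|^2\le\rho^\eps\int|v|^2f^\eps\,dv$, ensure that all integrands are in $L^1$, so every step above is rigorous. The conclusion \eqref{vlasov-e} is then immediate from \eqref{kinetic-ineq}.
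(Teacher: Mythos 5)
Your proof is correct and follows essentially the same strategy as the paper: both reduce the lemma to the pointwise-in-time inequality $\tilde{\mathcal{D}}_2(f^\eps)\le \mathcal{D}_2(f^\eps)$ and then substitute into \eqref{kinetic-ineq}, with the key input in both cases being the orthogonality $\int_{\bbr^d} f^\eps(v-u^\eps)\,dv=0$. The only (cosmetic) difference is in how the algebra is organized: the paper writes $\tilde{\mathcal{D}}_2=\mathcal{I}_1+\mathcal{I}_2$ with $\mathcal{I}_1=\mathcal{D}_2$ and shows $\mathcal{I}_2\le 0$ via the Cauchy--Schwarz bound $\rho^\eps|u^\eps|^2\le\int|v|^2f^\eps\,dv$, whereas your three-term expansion of $v-w$ exhibits $\mathcal{D}_2-\tilde{\mathcal{D}}_2$ directly as an integral of nonnegative squares (in fact an exact identity), which is arguably cleaner; note only that the ``technical point'' at the end involves Fubini, not integration by parts.
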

\begin{proof}
Recalling \eqref{kinetic-ineq}, 
it is enough to show $\tilde{\mathcal{D}}_2(f^{\eps})\le {\mathcal{D}}_2(f^{\eps})$.
We first rewrite $\tilde{\mathcal{D}}_2 (f^{\eps})$ in terms of the mesoscopic variables as follows:
using $\psi(x-y)=\psi(y-x)$, we have
\begin{align*}
\begin{aligned}
\tilde{\mathcal{D}}_2 (f^{\eps}) &= \frac{1}{2}\int_{\bbt^{2d}\times \bbr^{2d}} \psi(x-y) f^{\eps}(t,x,v) f^{\eps}(t,y,w) (v-w)\cdot (u^{\eps}(t,x)-u^{\eps}(t,y)) \,dv\,dw\,dx\,dy\\
&= \int_{\bbt^{2d}\times \bbr^{2d}} \psi(x-y) f^{\eps}(t,x,v) f^{\eps}(t,y,w) (v-w)\cdot u^{\eps}(t,x) \,dv\,dw\,dx\,dy\\
&=\underbrace{\int_{\bbt^{2d}\times \bbr^{2d}} \psi(x-y) f^{\eps}(t,x,v) f^{\eps}(t,y,w) (v-w)\cdot v \,dv\,dw\,dx\,dy}_{=:\mathcal{I}_1}\\
&\quad \underbrace{+ \int_{\bbt^{2d}\times \bbr^{2d}} \psi(x-y) f^{\eps}(t,x,v) f^{\eps}(t,y,w) (v-w)\cdot (u^{\eps}(t,x)-v) \,dv\,dw\,dx\,dy}_{=:\mathcal{I}_2}.
\end{aligned}
\end{align*}
First, we have 
\[
\mathcal{I}_1=\frac{1}{2}\int_{\bbt^{2d}\times \bbr^{2d}} \psi(x-y) f^{\eps}(t,x,v) f^{\eps}(t,y,w) |v-w|^2 \,dx\,dy\,dv\,dw={\mathcal{D}}_2(f^{\eps}).
\]
We next claim $\mathcal{I}_2\le 0$.\\
Indeed, since
\begin{align}
\begin{aligned}\label{pre-mini}
\rho^{\eps} |u^{\eps}|^2 = \frac{\Big(\int_{\bbr^d} v f^{\eps} dv \Big)^2}{\int_{\bbr^d}  f^{\eps} dv} \le \int_{\bbr^d} |v|^2 f^{\eps} dv,
\end{aligned}
\end{align}
we have
\begin{multline*}
\int_{\bbt^{2d}\times \bbr^{2d}} \psi(x-y) f^{\eps}(t,x,v) f^{\eps}(t,y,w) |v|^2 \,dv\,dw\,dx\,dy\\
\ge  \int_{\bbt^{2d}} \psi(x-y) \rho^{\eps}(t,y)\rho^{\eps}(t,x) |u^{\eps}(t,x)|^2 \,dx\,dy.
\end{multline*}
Then, since
\begin{multline*}
\int_{\bbt^{2d}\times \bbr^{2d}} \psi(x-y) f^{\eps}(t,x,v) f^{\eps}(t,y,w) u^{\eps}(t,x)\cdot w  \,dv\,dw\,dx\,dy \\=\int_{\bbt^{2d}} \psi(x-y) \rho^{\eps}(t,x)\rho^{\eps}(t,y) u^{\eps}(t,x)\cdot u^{\eps}(t,y) \,dx\,dy,
\end{multline*}
\begin{multline*}
\int_{\bbt^{2d}\times \bbr^{2d}} \psi(x-y) f^{\eps}(t,x,v) f^{\eps}(t,y,w) u^{\eps}(t,x)\cdot v  \,dv\,dw\,dx\,dy \\= \int_{\bbt^{2d}} \psi(x-y) \rho^{\eps}(t,x)\rho^{\eps}(t,y) |u^{\eps}(t,x)|^2 \,dx\,dy,
\end{multline*}
\begin{multline*}
\int_{\bbt^{2d}\times \bbr^{2d}} \psi(x-y) f^{\eps}(t,x,v) f^{\eps}(t,y,w) v\cdot w \,dv\,dw\,dx\,dy\\=\int_{\bbt^{2d}} \psi(x-y) \rho^{\eps}(t,x)\rho^{\eps}(t,y) u^{\eps}(t,x)\cdot u^{\eps}(t,y) \,dx\,dy,
\end{multline*}
we conclude that $\mathcal{I}_2\le 0$, as desired.
\end{proof}

\subsubsection{\bf Verification of (${\mathcal H}2$):} We show that the assumptions (${\mathcal A}1$)-(${\mathcal A}3$) for initial data imply (${\mathcal H}2$). 
Using \eqref{relative-e} and assumption $({\mathcal A}3)$, we have
\[
\int_{\bbt^d} \eta(U^{\eps}_0|U_0)\,dx = \frac{1}{2}\int_{\bbt^d}\rho^{\eps}_0|u^{\eps}_0 - u_0|^2 dx \le  C\eps^2 \int_{\bbt^d}\rho_0^\eps \,dx \le C\eps^2.
\]
Since it follows from $(\mathcal{A}1)$-$(\mathcal{A}3)$ that 
\[
\int_{\bbt^d}( \mathcal{F}(f^{\eps}_0)- \eta(U_0))\,dx=\mathcal{O}(\eps),
\]
and
\begin{align*}
\begin{aligned}
\int_{\bbt^d}( \eta(U_0)- \eta(U^{\eps}_0))\,dx &= \frac{1}{2} \int_{\bbr^d}\left( \rho_0 |u_0|^2 -\rho_0^\eps |u_0^{\eps}|^2\right) \\
&\leq \frac{1}{2} \int_{\bbt^d} |\rho_0-\rho_0^\eps| |u_0|^2+ \frac{1}{2} \int_{\bbt^d}\rho_0^\eps \left| |u_0^{\eps}|^2 - |u_0|^2\right| =\mathcal O(\eps),
\end{aligned}
\end{align*}
we have
\[
\int_{\bbt^d} \big( \mathcal{F}(f^{\eps}_0)- \eta(U^\eps_0)\big)\,dx=\mathcal O(\eps).
\]
It is obvious that $(\mathcal{A}1)$ implies
\[
\int_{\bbt^d} \mathcal{F}(f^{\eps}_0)\,dx\le C.
\]

\subsubsection{\bf Verification of (${\mathcal H}3$):} It follows from \eqref{pre-mini} that
\beq\label{mini}
\eta(U^\eps)=\rho^\eps \frac{|u^{\eps}|^2}{2} \le \int_{\bbr^d} \frac{|v|^2}{2} f^{\eps} dv = \mathcal{F}(f^{\eps}).
\eeq

\subsubsection{\bf Verification of (${\mathcal H}4$):}
Since
\[
A(U)={ P^T \choose \frac{P\otimes P}{\rho}},
\]
we have
\begin{align*}
\begin{aligned}
DA(U)\cdot(U^{\eps}-U) &= D_{\rho}A(U) (\rho^{\eps}-\rho) + D_{P_i} A(U) (P_i^{\eps} - P_i) \\
&={ ({P^{\eps}} -P)^T \choose -\frac{\rho^{\eps}-\rho}{\rho^2} P\otimes P +\frac{1}{\rho} P\otimes (P^{\eps} -P ) +   \frac{1}{\rho} (P^{\eps} -P )\otimes P },
\end{aligned}
\end{align*}
which yields
\begin{align*}
\begin{aligned}
A(U^{\eps}|U)&={ 0 \choose  \frac{1}{\rho^{\eps}}P^{\eps}\otimes P^{\eps}- \frac{1}{\rho}P\otimes P +\frac{\rho^{\eps}-\rho}{\rho^2} P\otimes P -\frac{1}{\rho} P\otimes (P^{\eps} -P ) -\frac{1}{\rho} (P^{\eps} -P )\otimes P }\\
&= {0 \choose \rho^{\eps} (u^{\eps} -u)\otimes (u^{\eps} -u)}.
\end{aligned}
\end{align*}
Therefore, using \eqref{D-eta} and \eqref{relative-e}, we have
\begin{align*}
\begin{aligned}
\Big|\int \nabla_x\bigl(D\eta(U)\bigr): A(U^\eps|U)\,dx \Big|  &= \Big|\int_0^t\int_{\bbt^d} \rho^{\eps} (u^{\eps} -u)\otimes (u^{\eps} -u) :  \nabla_x u\,dx\,ds \Big| \\
&\le C\|\nabla_x u\|_{L^{\infty}((0,T_*)\times\bbt^d)} \int_0^t\int_{\bbt^d} \eta(U^\eps|U) \,dx\,ds.
\end{aligned}
\end{align*}

\subsubsection{\bf Verification of (${\mathcal H}5$):}
For a weak solution $f^{\eps}$ to \eqref{vlasov}, it follows from \eqref{nonclosed} that $U^{\eps}={\rho^{\eps} \choose P^{\eps}}$ solves the system:
\beq\label{weak system}
\partial_t U^{\eps}  + {\rm div}_x A(U^{\eps}) -F(U^{\eps})={\rm div}_x {0 \choose -\int_{\bbr^d} (v-u^{\eps})\otimes(v-u^{\eps})f^{\eps}dv}.
\eeq
where the equality holds in the sense of distributions (see \eqref{weak-form}). 
Therefore, we have
\begin{align*}
\begin{aligned} 
&\Big|\int D\eta(U) \cdot[\partial_tU^\eps + {\rm div}_x A(U^\eps) - F(U^\eps)]\,dx  \Big| \\
&\quad= \Big|\int_{\bbt^d} \nabla_x u:\Big(\int_{\bbr^d}  (v-u^{\eps})\otimes(v-u^{\eps}) f^{\eps} dv \Big)\,dx \Big| \\
&\quad \le C\|\nabla_x u\|_{L^{\infty}((0,T_*)\times\bbt^d)} \int_{\bbt^d\times\bbr^d} |v-u^{\eps}|^2  f^{\eps} dv\,dx =  C\|\nabla_x u\|_{L^{\infty}((0,T_*)\times\bbt^d)}  \mathcal{D}_1(f^{\eps}).
\end{aligned}
\end{align*}

\subsubsection{\bf Verification of (${\mathcal H}6$):} From the proof of Proposition 4.2 in \cite{K-M-T-2}, we see
\[
-\int_{\bbt^d} \big[ D^2\eta(U) F(U) (U^\eps-U) +D\eta(U)F(U^\eps) \big]\,dx=K_1+K_2+K_3,
\]
where
\begin{align*}
\begin{aligned}
&K_1:= -\frac{1}{2}\int_{\bbt^{2d}} \psi(x-y) \rho^{\eps}(x)\rho^{\eps}(t,y) \big|(u^{\eps}(x) - u(x)) -(u^{\eps}(y)-u(y))\big|^2 \,dx\,dy,\\
&K_2:=\frac{1}{2}\int_{\bbt^{2d}} \psi(x-y) \rho^\eps(x)\rho^\eps(y) |u^\eps(x)-u^\eps(y)|^2 \,dx\,dy,\\
&K_3:=  \int_{\bbt^{2d}} \psi(x-y) \rho^{\eps}(x) ( \rho^{\eps}(y)-\rho(y)) (u(y)-u(x))( u^{\eps}(x)-u(x)) \,dx \,dy.
\end{aligned}
\end{align*}
Notice that $K_1\le 0$, and $K_2=\tilde{\mathcal{D}}_2(f^{\eps})$ where $\tilde{\mathcal{D}}_2(f^{\eps})$ is in Lemma \ref{lem-ineq}.\\
To estimate $K_3$, we separate it into two parts:
\begin{align*}
\begin{aligned}
K_3&=\int_{\bbt^{d}} \Big(\int_{\bbt^{d}}  \psi(x-y) u(y)  (\rho^{\eps}(y)-\rho(y))dy\Big)\rho^{\eps}(x)( u^{\eps}(x)-u(x))\,dx\\
&\quad-\int_{\bbt^{d}} \Big(\int_{\bbt^{d}}  \psi(x-y) (\rho^{\eps}(y)-\rho(y)) dy\Big)u(x)\rho^{\eps}(x)( u^{\eps}(x)-u(x))\,dx.
\end{aligned}
\end{align*}
Since $\psi$ and $u$ are Lipschitz, we use the Kantorovich-Rubinstein Theorem
(see \cite[Theorem 5.10 and Particular Case 5.16]{Villani}) to estimate
\begin{multline*}
K_3\le W_1(\rho^{\eps},\rho)\Big( \sup_{x\in \bbt^d}\|\psi(x-\cdot )u\|_{L^{\infty}(0,T_*;W^{1,\infty}(\bbt^d))}\\
+ \|\psi\|_{L^{\infty}(0,T_*;W^{1,\infty}(\bbt^d))}\|u\|_{L^\infty((0,T_*)\times \bbt^d)} \Big)
 \int_{\bbt^{d}} \rho^{\eps}(x) |u^{\eps}(x)-u(x)|\,  dx.
\end{multline*}
Therefore, since $W_1(\rho^{\eps},\rho) \leq W_2(\rho^{\eps},\rho)$, we obtain
\[
K_3\le C \Big(W_2^2(\rho^{\eps},\rho)+ \int_{\bbt^{d}} \rho^{\eps}(x) |u^{\eps}(x)-u(x)|^2  dx\Big).
\]
Hence we have verified (${\mathcal H}6$).

\subsubsection{\bf Verification of (${\mathcal H}7$):}
This will be shown by Lemma \ref{lem-com} below.
We first derive some estimates for the characteristics generated by the velocity fields $u^{\eps}$ and $u$.\\
For the velocity $u$ in the limit system \eqref{PE}, let $X$ be a characteristic generated by it, that is
\beq\label{X-reg}
\dot X(t,x)=u (t, X (t,x)),\quad X(0,x)=x.
\eeq
Then, thanks to the smoothness of $u$, it follows from \eqref{Xode} that 
\[
X(t)_{\#} \rho_0(x)\,dx =\rho(t,x)\,dx.
\]

On the other hand, since $u^{\eps}$ is not Lipschitz w.r.t $x$, we use a probabilistic representation for $\rho^{\eps}$ as a solution of the continuity equation in \eqref{system}. More precisely, \eqref{mini} and \eqref{kinetic-ineq} imply
\[
\int_{\bbt^d} |u^{\eps}(t)|^2 \rho^{\eps}(t)\, dx \le \int_{\bbt^d}\mathcal{F}(f^{\eps})(t)\,dx\le \int_{\bbt^d}\mathcal{F}(f^{\eps}_0)\,dx<\infty,
\]
so it follows from Proposition \ref{prop-prob} that there exists a probability measure $\eta^{\eps}$ in $\Gamma_{T_*}\times \bbt^d$ that is concentrated on the set of pairs $(\gamma,x)$ such that $\gamma$ is a solution of the ODE 
\beq \label{eq:ODE eps}
\dot \gamma(t)=u^{\eps} (\gamma (t)),\quad \gamma(0)=x,
\eeq
and  
\beq\label{result2}
\int_{\bbt^d} \varphi(x) \rho^{\eps}(t,x)\,dx = \int_{\Gamma_{T_*}\times\bbt^d} \varphi(\gamma(t))\, d\eta^{\eps}(\gamma,x),\quad \forall \,\varphi\in C^0 (\bbt^d), ~t\in [0,T_*].
\eeq
In particular, this says that the time marginal of the measure $\eta^{\eps}$ at time $0$
is given by $\rho^\eps(0)=\rho^{\eps}_0$.
Hence, by the disintegration theorem of measures (see for instance \cite[Theorem 5.3.1]{A-G-S} and the comments at the end of Section 8.2 in \cite{A-G-S}), we can write
$$
d\eta^\eps(\gamma,x)=\eta_x^\eps(d\gamma)\otimes  \rho_0^\eps(x)\,dx,
$$
where $\{\eta_x^\eps\}_{x\in \bbt^d}$ is a family of probability measures on $\Gamma_{T^*}$ concentrated on solutions of \eqref{eq:ODE eps}.\\

For the flow $X$ in \eqref{X-reg}, we also consider the densities $\widetilde{\rho}^\eps(t)$ defined as
\begin{equation}
\label{eq:tilde rho eps}
\widetilde{\rho}^\eps(t,x)\,dx=X(t)_\#\rho_0^\eps(x)\,dx.
\end{equation}
Note that, since
\begin{multline*}
\|\rho(t)-\widetilde{\rho}^\eps(t)\|_{L^1(\bbt^d)}
=\sup_{\|\varphi\|_{\infty}\leq 1} \int_{\bbt^d} \varphi(x)[\rho(t,x)-\widetilde{\rho}^\eps(t,x)]\,dx\\
=\sup_{\|\varphi\|_{\infty}\leq 1} \int_{\bbt^d} \varphi(X(t,x))[\rho_0(x)-{\rho}_0^\eps(x)]\,dx
\leq \|\rho_0^\eps-\rho_0\|_{L^1(\bbt^d)}.
\end{multline*}
we have
\begin{equation}
\label{eq:L1 eps}
\|\rho(t)-\widetilde{\rho}^\eps(t)\|_{L^1(\bbt^d)}\leq \|\rho_0^\eps-\rho_0\|_{L^1(\bbt^d)}.
\end{equation}

We now consider the measure $\nu^\eps$ 
on $ \Gamma_{T_*}\times\Gamma_{T_*}\times \bbt^d$ defined as
$$
d\nu^\eps(\gamma,\sigma,x)=
\eta_x^\eps(d\gamma)\otimes \delta_{X(\cdot,x)}(d\sigma)\otimes \rho_0^\eps(x)\,dx.
$$
If we consider the evaluation map
$$
E_t:\Gamma_{T_*}\times\Gamma_{T_*}\times \bbt^d\to\bbt^d\times \bbt^d,\qquad
E_t(\gamma,\sigma,x)=(\gamma(t),\sigma(t)),
$$
it follows that the measure $\pi^\eps_t:=(E_t)_\# \nu^\eps$ on $\bbt^d\times \bbt^d$ has marginals  $\rho^\eps(t,x)\,dx$
and $\widetilde{\rho}^\eps(t,y)\,dy$ for all $t \geq 0$. Therefore, we have
\begin{align}
\begin{aligned}\label{XW}
\int_{\Gamma_{T_*}\times \bbt^d} |\gamma(t)-X(t,x)|^2 \eta^\eps_x(d\gamma) \otimes \rho_0^{\eps}(x)\,dx
&=\int_{\Gamma_{T_*}\times\Gamma_{T_*}\times \bbt^d}|\gamma(t)-\sigma(t)|^2d\nu^\eps(\gamma,\sigma,x)\\
& = \int_{\bbt^{2d}} |x-y|^2 d\pi_t^\eps(x,y)\\
& \ge W^2_2 (\rho^\eps(t),\widetilde{\rho}^{\eps}(t)).
\end{aligned}
\end{align}

We now use the above results to prove the following lemma.
\begin{lemma}\label{lem-com} 
Under the same assumptions as in Theorem \ref{main thm}, we have that 
\beq\label{Wa-est}
W^2_2 ({\rho}^{\eps}(t),\rho(t)) \le Ce^{T_*}\int_0^t \int_{\bbt^d} |u^{\eps}(s,x) -u(s,x) |^2 \rho^{\eps}(s,x)\,dx\,ds
+ \mathcal O(\eps),\qquad t\le T_*.
\eeq
\end{lemma}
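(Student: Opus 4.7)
The plan is to introduce the intermediate density $\widetilde{\rho}^\eps(t)=X(t)_\#\rho_0^\eps$ defined in \eqref{eq:tilde rho eps} and split the target via the triangle inequality,
\[
W_2^2(\rho^\eps(t),\rho(t))\le 2\,W_2^2(\rho^\eps(t),\widetilde{\rho}^\eps(t))+2\,W_2^2(\widetilde{\rho}^\eps(t),\rho(t)).
\]
The density $\widetilde{\rho}^\eps$ uses the smooth flow $X$ of the limit field $u$ but is started from $\rho_0^\eps$, so it absorbs the discrepancy between the two initial data, while the comparison between $\rho^\eps(t)$ and $\widetilde{\rho}^\eps(t)$ isolates the effect of $u^\eps$ versus $u$ along characteristics.

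For the second term on the right-hand side, since both $\widetilde{\rho}^\eps(t)$ and $\rho(t)$ are push-forwards under the same smooth map $X(t,\cdot)$, the $L^1$-contraction \eqref{eq:L1 eps} combined with assumption $({\mathcal A}2)$ yields $\|\widetilde{\rho}^\eps(t)-\rho(t)\|_{L^1(\bbt^d)}=\mathcal O(\eps)$; Lemma \ref{lem:W2 L1} then gives $W_2^2(\widetilde{\rho}^\eps(t),\rho(t))=\mathcal O(\eps)$.

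For the first and more delicate term, the inequality \eqref{XW} reduces the task to bounding
\[
I(t):=\int_{\Gamma_{T_*}\times\Gamma_{T_*}\times\bbt^d}|\gamma(t)-X(t,x)|^2\,d\nu^\eps(\gamma,\sigma,x).
\]
On $\nu^\eps$-a.e.\ triple $(\gamma,\sigma,x)$ the curves satisfy $\dot\gamma(t)=u^\eps(t,\gamma(t))$ (by Proposition \ref{prop-prob}) and $\sigma(t)=X(t,x)$ with $\dot X(t,x)=u(t,X(t,x))$. Differentiating $|\gamma(t)-X(t,x)|^2$ in $t$, adding and subtracting $u(t,\gamma(t))$, and using the Lipschitz bound $\ell_{T_*}:=\|\nabla_x u\|_{L^\infty((0,T_*)\times\bbt^d)}<\infty$ from Proposition \ref{prop-strong} together with Young's inequality, I would obtain the pointwise estimate
\[
\tfrac{d}{dt}|\gamma(t)-X(t,x)|^2\le (1+2\ell_{T_*})|\gamma(t)-X(t,x)|^2+|u^\eps(t,\gamma(t))-u(t,\gamma(t))|^2.
\]
Integrating against $\nu^\eps$ and invoking the representation formula \eqref{result2}, whose content is precisely that the $\gamma(t)$-marginal of $\nu^\eps$ equals $\rho^\eps(t,y)\,dy$, rewrites the forcing term as $\int_{\bbt^d}|u^\eps(t,y)-u(t,y)|^2\rho^\eps(t,y)\,dy$. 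Since $\gamma(0)=x=X(0,x)$ gives $I(0)=0$, Gronwall's inequality on $[0,T_*]$ closes the estimate for $W_2^2(\rho^\eps(t),\widetilde{\rho}^\eps(t))$, and combining with the previous paragraph yields \eqref{Wa-est}.

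The main subtlety is that $u^\eps$ is not Lipschitz, so one cannot invoke a classical characteristic flow to represent $\rho^\eps(t)$. The product measure $\nu^\eps$ built from Proposition \ref{prop-prob} — coupling the random characteristic $\gamma$ of $u^\eps$ with the deterministic flow $\sigma=X(\cdot,x)$ of $u$ through the common initial point $x$ — is exactly the device that allows the pointwise ODE comparison for $|\gamma-X|^2$ to be transferred into a genuine Wasserstein estimate, while the splitting through $\widetilde{\rho}^\eps$ guarantees that the residual $L^1$ error between initial data is controlled by $({\mathcal A}2)$.
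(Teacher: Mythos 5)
Your proposal is correct and follows essentially the same route as the paper: the same intermediate density $\widetilde{\rho}^\eps(t)=X(t)_\#\rho_0^\eps$ handled via Lemma \ref{lem:W2 L1}, \eqref{eq:L1 eps} and $({\mathcal A}2)$, and the same coupling measure $\nu^\eps$ from Proposition \ref{prop-prob} leading to a Gronwall estimate on $\int|\gamma(t)-X(t,x)|^2$, transferred to $W_2^2$ through \eqref{XW}. Only the bookkeeping of the Young/Lipschitz constants differs, which is immaterial.
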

\begin{proof}
Let $\widetilde{\rho}^\eps$ be defined as \eqref{eq:tilde rho eps}.
We begin by observing that, thanks to Lemma \ref{lem:W2 L1}, \eqref{eq:L1 eps}, and assumption (${\mathcal A}2$), it follows that
$$
W_2^2(\widetilde{\rho}^\eps(t),\rho(t)) \leq \mathcal O(\eps).
$$
Hence, to prove \eqref{Wa-est}, it is enough to bound
$W^2_2 ({\rho}^{\eps}(t),\widetilde{\rho}^\eps(t))$.

To this aim, we try to get a Gronwall-type inequality on 
\[\int_{\Gamma_{T_*}\times \bbt^d} |\gamma(t)-X(t,x)|^2 \eta^\eps_x(d\gamma) \otimes \rho_0^{\eps}(x)\,dx.\] 
Since 
\[
\dot\gamma(t)-\dot X(t,x) = \big(u^{\eps} (\gamma(t))-u(\gamma(t))\big)+\big(u (\gamma(t))-u (X(t,x))\big),
\]
(by \eqref{X-reg} and \eqref{eq:ODE eps}),
we have
\begin{align*}
\begin{aligned} 
&\frac{1}{2}\frac{d}{dt}\int_{\Gamma_{T_*}\times \bbt^d} |\gamma(t)-X(t,x)|^2 d\eta^\eps_x(\gamma) \otimes \rho_0^{\eps}(x)\,dx \\
 &\le \int_{\Gamma_{T_*}\times \bbt^d} |u^{\eps} (\gamma(t))-u (\gamma(t))|^2 d\eta_x^\eps(\gamma) \otimes \rho_0^{\eps}(x)\,dx \\
&\quad + \int_{\Gamma_{T_*}\times \bbt^d}|u (\gamma(t))-u (X(t,x))|^2 d\eta_x^\eps(\gamma) \otimes \rho_0^{\eps}(x)\,dx \\
 &\quad+ 2\int_{\Gamma_{T_*}\times \bbt^d} |\gamma(t)-X(t,x)|^2 d\eta_x^\eps(\gamma) \otimes \rho_0^{\eps}(x)\,dx.
\end{aligned}
\end{align*}
Notice that, thanks to \eqref{result2},
\[
\int_{\Gamma_{T_*}\times \bbt^d} |u^{\eps} (\gamma(t))-u (\gamma(t))|^2 d\eta_x^\eps(\gamma) \otimes \rho_0^{\eps}(x)\,dx  =\int_{\bbt^d} |u^{\eps}(t,x) -u(t,x) |^2 \rho^{\eps}(t,x)\,dx.
\]
Moreover, since
\begin{multline*}
\int_{\Gamma_{T_*}\times \bbt^d}|u (\gamma(t))-u (X(t,x))|^2 d\eta_x^\eps(\gamma) \otimes \rho_0^{\eps}(x)\,dx\\  \le  \|u\|_{L^{\infty}(0,T_*;W^{1,\infty}(\bbt^d))}\int_{\Gamma_{T_*}\times \bbt^d} |\gamma(t)-X(t,x)|^2 d\eta_x^\eps(\gamma) \otimes \rho_0^{\eps}(x)\,dx,
\end{multline*}
we have
\begin{multline*}
\frac{d}{dt}\int_{\Gamma_{T_*}\times \bbt^d} |\gamma(t)-X(t,x)|^2 d\eta_x^\eps(\gamma) \otimes \rho_0^{\eps}(x)\,dx\\
 \le C \int_{\Gamma_{T_*}\times \bbt^d} |\gamma(t)-X(t,x)|^2 d\eta_x^\eps(\gamma) \otimes \rho_0^{\eps}(x)\,dx+\int_{\bbt^d} |u^{\eps}(t,x) -u(t,x) |^2 \rho^{\eps}(t,x)\,dx.
\end{multline*}
Therefore, using Gronwall's inequality together with $\gamma(0)=X(0,x)=x$ for $\eta_x^\eps$-a.e. $\gamma$, we obtain
\begin{multline*}
\int_{\Gamma_{T_*}\times \bbt^d} |\gamma(t)-X(t,x)|^2 d\eta_x^\eps(\gamma) \otimes \rho_0^{\eps}(x)\,dx\\ \le Ce^{T_*}\int_0^t \int_{\bbt^d} |u^{\eps}(s,x) -u(s,x) |^2 \rho^{\eps}(s,x)\,dx\,ds,\qquad t\le T_*.
\end{multline*}
Hence, using \eqref{XW} we get the desired control on $W^2_2 ({\rho}^{\eps}(t),\widetilde{\rho}^\eps(t))$, which concludes the proof.
\end{proof}

\subsection{Proof of \eqref{w-conv}}
Here we use the estimate \eqref{main-ineq} to show the convergence \eqref{w-conv}. \\
First, since \eqref{vlasov-e} and (${\mathcal A}1$) imply
\[
 \int_0^t  \mathcal{D}_1(f^{\eps})(s)\,ds \le C\eps,
\]
using \eqref{main-ineq}, we have
\begin{align}
\begin{aligned}\label{inst0}
\int_0^{T_*} \int_{\bbt^{d}\times\bbr^d} f^{\eps}|v-u|^2\,dx\,dv\,ds &\le 2\int_0^{T_*} \int_{\bbt^{d}\times\bbr^d} f^{\eps}(|v-u^{\eps}|^2 + |u^{\eps}-u|^2)\,dx\,dv\,ds\\
& \le C(1+T_*)\eps.
\end{aligned}
\end{align}
Then, for any $\phi\in C^1_c((0,T_*)\times\bbt^{d}\times\bbr^d)$, 
\begin{align*}
\begin{aligned} 
&\int_0^{T_*} \int_{\bbt^{d}\times\bbr^d} \phi(s,x,v) f^{\eps}\,dx\,dv\,ds -\int_0^{T_*} \int_{\bbt^{d}\times\bbr^d} \phi(s,x,v)\rho\,\delta_{u}(dv) \,dx\,ds \\
&\quad = \int_0^{T_*} \int_{\bbt^{d}\times\bbr^d} \phi(s,x,v) f^{\eps}\,dx\,dv\,ds -\int_0^{T_*} \int_{\bbt^{d}} \phi(s,x,u)\rho \,dx\,ds\\
&\quad = \underbrace{\int_0^{T_*}\int_{\bbt^{d}\times\bbr^d}f^{\eps} \big(\phi(s,x,v)-\phi(s,x,u)\big)\,dx\,dv\,ds}_{=:I_1^{\eps}} +\underbrace{\int_0^{T_*} \int_{\bbt^{d}} \phi(s,x,u)(\rho^{\eps}-\rho) \,dx\,ds}_{=:I_2^{\eps}}.
\end{aligned}
\end{align*} 
Using \eqref{inst0}, we have
\begin{align*}
\begin{aligned} 
I_1^{\eps}&\le \|\nabla_v\phi\|_{\infty} \int_0^{T_*} \int_{\bbt^{d}\times\bbr^d} f^{\eps} |v-u|\,dx\,dv\,ds\\
&= \|\nabla_v\phi\|_{\infty} \Big(\int_0^{T_*} \int_{|v-u|\le \sqrt{\eps}} f^{\eps} |v-u|\,dx\,dv\,ds
+\int_0^{T_*} \int_{|v-u|> \sqrt{\eps}}f^{\eps} |v-u|\,dx\,dv\,ds \Big)\\
&\le \|\nabla_v\phi\|_{\infty}  \Big(\sqrt{\eps} T_*
+ \frac{1}{\sqrt{\eps}}\int_0^{T_*} \int_{|v-u|> \sqrt{\eps}} f^{\eps} |v-u|^2 dv\,dx\,ds \Big)\\
&\le C(1+T_*) \sqrt{\eps}.
\end{aligned}
\end{align*} 
Since $W_1 (\rho^{\eps},\rho)\leq W_2 (\rho^{\eps},\rho)\to0$ by \eqref{main-ineq}, we also have $I_2^{\eps}\to0$ as $\eps\to0$.\\
Hence, this completes the proof of  \eqref{w-conv}.

\appendix

\section{Proof of Theorem \ref{prop-flocking}}
We first estimate $\frac{d}{dt}{\mathcal E}_1$ as follows:
\begin{align*}
\begin{aligned} 
\frac{d}{dt}{\mathcal E}_1&= 2\int_{\bbt^d\times\bbr^d}f (u-v)\partial_tu  \,dv\,dx+ \int_{\bbt^d\times\bbr^d}\partial_t f |u-v|^2 \,dv\,dx \\
&:=I_1+I_2.
\end{aligned}
\end{align*}
First of all, by the definition of $u$, we have $\int f(u-v)\,dv=0$, hence $I_1=0$.\\
Concerning $I_2$, it follows from \eqref{main} that
\begin{align*}
\begin{aligned} 
I_2&=  \int_{\bbt^d\times\bbr^d} |u-v|^2 \Big(-\nabla_x\cdot(v f) - \nabla_v\cdot( L[f] f) - \nabla_v\cdot ((u-v)f) \Big)\,dv\,dx \\
&=  \underbrace{2\int_{\bbt^d\times\bbr^d}   \nabla_x u (u-v) \cdot v f \,dv\,dx}_{=:I_{21}}  \underbrace{-2\int_{\bbt^d\times\bbr^d}(u-v)\cdot L[f] f \,dv\,dx}_{=:I_{22}}   \\
&\quad  \underbrace{- 2\int_{\bbt^d\times\bbr^d} |u-v|^2  f \,dv\,dx}_{=-2\mathcal E_1}.
\end{aligned}
\end{align*}
Then, we use the stress tensor $P= \int_{\bbr^d} (v-u)\otimes (v-u) f\, dv$ to rewrite $I_{21}$ as
\begin{align*}
\begin{aligned} 
I_{21} =2\int_{\bbt^d\times\bbr^d}   \nabla_x u  (u-v) \cdot (v-u) f \,dv\,dx=2\int_{\bbt^d}  (\nabla_x\cdot P) \cdot u \,dx.
\end{aligned}
\end{align*}
Thanks to the estimate on $\mathcal{I}_2$ in the proof of Lemma \ref{lem-ineq}, we see that 
\[
I_{22}=-2\int_{\bbt^{2d}\times \bbr^{2d}} \psi(x-y) f(t,x,v) f(t,y,w) (u(t,x)-v)\cdot(w-v) \,dv\,dw\,dx\,dy\le 0. 
\]
Therefore, we have
\begin{equation}
\label{eq:der E1}
\frac{d}{dt}{\mathcal E}_1\le 2\int_{\bbt^d}  (\nabla_x\cdot P) \cdot u\,dx- 2{\mathcal E}_1.
\end{equation} 
We next estimate $\frac{d}{dt}{\mathcal E}_2$ as follows:
\begin{align*}
\begin{aligned} 
\frac{d}{dt}{\mathcal E}_2&= 2\int_{\bbt^{2d}}\partial_t\rho(t,x)\rho(t,y) |u(t,x) - u(t,y)|^2 \,dx\,dy \\
&\quad + 2\int_{\bbt^{2d}}\rho(t,x)\rho(t,y) (u(t,x) - u(t,y)) \partial_t (u(t,x) - u(t,y)) \,dx\,dy \\
&:=J_1 + J_2.
\end{aligned}
\end{align*}
Since it follows from \eqref{nonclosed} with $\eps=1$ that
\begin{align*}
\begin{aligned}
&\partial_t\rho + \nabla_x \cdot (\rho u)=0,\\
&\rho\partial_tu +\rho u\cdot\nabla_x u +\nabla_x\cdot P  =\int_{\bbr^d}L[f] fdv,\\
\end{aligned}
\end{align*}
we obtain (recall that $\|\rho\|_{L^1(\bbt^d)}=1$)
\begin{align*}
\begin{aligned} 
J_1&=-2\int_{\bbt^{2d}}\nabla_x \cdot (\rho u)(t,x)\rho(t,y) |u(t,x) - u(t,y)|^2 \,dx\,dy\\
&=4\int_{\bbt^{d}}\rho u\cdot \nabla_x u \cdot u \,dx - 4\int_{\bbt^{d}}\rho u\cdot \nabla_x u \,dx \cdot \int_{\bbt^{d}} \rho u\, dx,
\end{aligned}
\end{align*}
and 
\begin{align*}
\begin{aligned} 
J_2&= 4\int_{\bbt^{2d}}\rho(t,y) u(t,x) \rho(t,x) \partial_t u(t,x) \,dx\,dy -4\int_{\bbt^{2d}}\rho(t,y) u(t,y) \rho(t,x) \partial_t u(t,x) \,dx\,dy\\
&=-4\int_{\bbt^{d}}\rho u\cdot \nabla_x u \cdot u\,dx -4\int_{\bbt^{d}}\nabla_x\cdot P \cdot u\,dx\\
&\quad +4\underbrace{\int_{\bbt^{d}\times\bbr^d} u\cdot L[f] f\,dx\,dv}_{:=J_{21}} +4\int_{\bbt^{d}}\rho u\cdot \nabla_x u\,dx \cdot \int_{\bbt^{d}} \rho u\,dx\\
&\quad + 4\underbrace{\int_{\bbt^{d}}\nabla_x \cdot P dx}_{=0} \cdot \int_{\bbt^{d}} \rho u\,dx -4\underbrace{\int_{\bbt^{d}\times\bbr^d} L[f] f\,dx\,dv}_{:=J_{22}} \cdot \int_{\bbt^{d}} \rho u\,dx. 
\end{aligned}
\end{align*}
Now, we compute the above terms $J_{21}$ and $J_{22}$ as follows:
\begin{align*}
\begin{aligned} 
J_{21}&= \int_{\bbt^{2d}\times \bbr^{2d}} \psi(x-y) f(t,x,v) f(t,y,w) (w-v)\cdot u(t,x) \,dv\,dw\,dx\,dy \\
&= \int_{\bbt^{2d}} \psi(x-y) \rho(t,x)\rho(t,y) (u(t,y)-u(t,x))\cdot u(t,x) \,dx\,dy\\
&=-\frac{1}{2}\int_{\bbt^{2d}} \psi(x-y) \rho(t,x)\rho(t,y) |u(t,x)-u(t,y)|^2 \,dx\,dy,\\
J_{22}&=\int_{\bbt^{2d}\times \bbr^{2d}} \psi(x-y) f(t,x,v) f(t,y,w) (w-v) \,dv\,dw\,dx\,dy =0.
\end{aligned}
\end{align*}
Therefore, we have
$$
\frac{d}{dt}{\mathcal E}_2= -4\int_{\bbt^{d}}\nabla_x\cdot P \cdot u\,dx
-2\int_{\bbt^{2d}} \psi(x-y) \rho(t,x)\rho(t,y) |u(t,x)-u(t,y)|^2 \,dx\,dy.
$$
Recalling \eqref{eq:der E1}, proves that 
\begin{align*}
\begin{aligned} 
\frac{d}{dt}{\mathcal E}&\le - 2{\mathcal E}_1-\int_{\bbt^{2d}} \psi(x-y) \rho(t,x)\rho(t,y) |u(t,x)-u(t,y)|^2 \,dx\,dy\\
&\le  -2{\mathcal E}_1 -\psi_m{\mathcal E}_2 \le -2\min\{1,\psi_m \} {\mathcal E},
\end{aligned}
\end{align*}
which completes the proof of \eqref{eq:E}.

To show the second bound \eqref{eq:E1}, note that if $\ell_T:=\sup_{t\in[0,T]}\|\nabla_x u\|_{L^\infty(\mathbb T^d)}<\infty$ then
\eqref{eq:der E1} yields
$$
\frac{d}{dt}\mathcal E_1(t)\leq -2\int_{\mathbb T^d}\nabla_xu:P\,dx-2\mathcal E_1\leq 2\ell_T\int_{\mathbb T^d\times\bbr^d}|u-v|^2f\,dv\,dx-2\mathcal E_1(t)=2(\ell_T-1)\mathcal E_1(t),
$$
which proves \eqref{eq:E1}.
\qed

\bibliographystyle{amsplain}
\bibliography{hydrodynamic_limit_CS_APDE_revision2}

\providecommand{\bysame}{\leavevmode\hbox to3em{\hrulefill}\thinspace}
\providecommand{\MR}{\relax\ifhmode\unskip\space\fi MR }
% \MRhref is called by the amsart/book/proc definition of \MR.
\providecommand{\MRhref}[2]{%
  \href{http://www.ams.org/mathscinet-getitem?mr=#1}{#2}
}
\providecommand{\href}[2]{#2}
\begin{thebibliography}{10}

\bibitem{A-G-S}
L.~Ambrosio, N.~Gigli, and G.~Savare, \emph{Gradient flows in metric spaces and
  in spaces of probability measures, lectures in mathematics eth zurich},
  Birkhauser Verlag, Basel (2005).

\bibitem{BV}
F.~Berthelin and A.~Vasseur, \emph{From kinetic equations to multidimensional
  isentropic gas dynamics before shocks,}, SIAM J. Math. Anal., 36(6),
  1807--1835 (2005).

\bibitem{Bouchut}
F.~Bouchut, \emph{On zero pressure gas dynamics, {in Advances in Kinetic Theory
  and Computing},}, Ser. Adv. Math. Appl. Sci., World Scientific Publishing,
  River Edge, NJ, 22, 171--190. (1994).

\bibitem{BJ}
F.~Bouchut and F.~James, \emph{Duality solutions for pressureless gases,
  monotone scalar conservation laws, and uniqueness,}, Comm. Part. Diff. Eqns,
  24, 2173--2189 (1999).

\bibitem{Boudin}
L.~Boudin, \emph{A solution with bounded expansion rate to the model of viscous
  pressureless gases,}, SIAM J. Math. Anal., 32(1) 172--193 (2000).

\bibitem{BG}
Y.~Brenier and E.~Grenier, \emph{Sticky particles and scalar conservation
  laws,}, SIAM J. Numer. Anal. 35 (3) 2317--2328, (1998).

\bibitem{CCDW}
E.~Carlen, M.~Carvalho, P.~Degond, and B.~Wennberg, \emph{A {B}oltzmann model
  for rod alignment and schooling fish,}, Nonlinearity, 28, 1783--1803 (2015).

\bibitem{C-C-K}
J.~A. Carrillo, Y.-P. Choi, and T.~K. Karper, \emph{On the analysis of a
  coupled kinetic-fluid model with local alignment forces}, Ann. Inst. H.
  Poincare Anal. Non Lineaire, 33, 2, 273--307 (2016).

\bibitem{C-F-R-T}
J.~A. Carrillo, M.~Fornasier, J.~Rosado, and G.~Toscani, \emph{Asymptotic
  flocking dynamics for the kinetic {Cucker-Smale} model.}, SIAM J. Math. Anal.
  42, 218--236 (2010).

\bibitem{CCR}
J.~A. C$\tilde{\mbox{a}}$nizo, J.~A. Carrillo, and J.~Rosado, \emph{A
  well-posedness theory in measures for some kinetic models of collective
  motion}, Math. Models Methods Appl. Sci., 21, 515--539 (2011).

\bibitem{C-S}
F.~Cucker and S.~Smale, \emph{Emergent behavior in flocks}, IEEE Trans. Autom.
  Control 52, 852-862 (2007).

\bibitem{DKRT}
T.~Do, A.~Kiselev, L.~Ryzhik, and C.~Tan, \emph{Global regularity for the
  fractional {E}uler alignment system,}, https://arxiv.org/abs/1701.05155,.

\bibitem{DFT}
R.~Duan, M.~Fornasier, and G.~Toscani, \emph{A kinetic flocking model with
  diffusion}, Commun. Math. Phys. 300, 95-145 (2010).

\bibitem{FHT}
M.~Fornasier, J.~Haskovec, and G.~Toscani, \emph{Fluid dynamic description of
  flocking via {Povzner-Boltzmann} equation ,}, Physica D, 240, 21-31, (2011).

\bibitem{GJV}
T.~Goudon, P.-E Jabin, and A.~Vasseur, \emph{Hydrodynamic limit for the
  {Vlasov-Navier-Stokes} equations. ii. fine particles regime,}, Indiana Univ.
  Math. J. 53(6), 1517Ð1536, (2004).

\bibitem{HHW}
S.-Y. Ha, F.~Huang, and Y.~Wang, \emph{A global unique solvability of entropic
  weak solution to the one-dimensional pressureless {E}uler system with a
  flocking dissipation ,}, J. Differential Equations. 257, 1333--1371, (2014).

\bibitem{HJNXZ}
S.-Y. Ha, J.~Jeong, S.~E. Noh, Q.~Xiao, and X.~Zhang, \emph{Emergent dynamics
  of {Cucker-Smale} flocking particles in a random environment,}, Journal of
  Differential Equations 262 (3) 2554--2591, (2017).

\bibitem{H-K-K1}
S.-Y. Ha, M.-J. Kang, and B.~Kwon, \emph{A hydrodynamic model for the
  interaction of {Cucker-Smale} particles and incompressible fluid}, Math.
  Models Methods Appl. Sci., 11, 2311--2359 (2014).

\bibitem{H-K-K2}
\bysame, \emph{Emergent dynamics for the hydrodynamic {Cucker-Smale} system in
  a moving domain}, SIAM J. Math. Anal. 47(5), 3813--3831 (2015).

\bibitem{HLSX}
S.-Y. Ha, Z.~Li, M.~Slemrod, and X.~Xue, \emph{Flocking behavior of the
  {Cucker-Smale} model under rooted leadership in a large coupling limit,},
  Quarterly of Applied Mathematics, (2014).

\bibitem{H-L}
S.-Y. Ha and J.~G. Liu, \emph{A simple proof of the {Cucker-Smale} flocking
  dynamics and mean-field limit}, Communications in Mathematical Sciences,
  7(2), 297--325 (2009).

\bibitem{H-T}
S.-Y. Ha and E.~Tadmor, \emph{From particle to kinetic and hydrodynamic
  descriptions of flocking}, Kinetic and Related Models, 1(3), 415--435 (2008).

\bibitem{HW}
F.~Huang and Z.~Wang, \emph{Well posedness for pressureless flow,}, Comm. Math.
  Phys., 222, 117--146 (2001).

\bibitem{Jab}
P.-E. Jabin, \emph{\'equations de transport mod\'elisant des particules en
  interaction dans un fluide et comportement asymptotiques}, Phd Thesis (2000).

\bibitem{JR}
P.-E. Jabin and T.~Rey, \emph{Hydrodynamic limit of granular gases to
  pressureless {E}uler in dimension 1,}, Quart. Appl. Math. 75, 155-179 (2016).

\bibitem{K18}
M.-J. Kang, \emph{From the {Vlasov-Poisson equation with strong local alignment
  to the pressureless Euler-Poisson system},}, Appl. Math. Lett., 79, 85-91
  (2018).

\bibitem{KV1}
M.-J. Kang and A.~Vasseur, \emph{Asymptotic analysis of {V}lasov-type equations
  under strong local alignment regime}, Math. Mod. Meth. Appl. Sci., 25(11),
  2153--2173 (2015).

\bibitem{K-M-T-1}
T.~K. Karper, A.~Mellet, and K.~Trivisa, \emph{Existence of weak solutions to
  kinetic flocking models}, SIAM J. Math. Anal., 45(1), 215 -- 243 (2013).

\bibitem{K-M-T-3}
\bysame, \emph{On strong local alignment in the kinetic {Cucker-Smale} model},
  Hyperbolic Conservation Laws and Related Analysis with Applications, Springer
  Proceedings in Math. Statistics Vol. 49, 227--242 (2014).

\bibitem{K-M-T-2}
\bysame, \emph{Hydrodynamic limit of the kinetic {Cucker-Smale} flocking
  model}, Math. Models Methods Appl. Sci. (2015).

\bibitem{MV}
A.~Mellet and A.~Vasseur, \emph{Asymptotic analysis for a
  {Vlasov-Fokker-Planck} compressible {Navier-Stokes} systems of equations,},
  Commun. Math. Phys. 281, 573--596, (2008).

\bibitem{M-T-1}
S.~Motsch and E.~Tadmor, \emph{A new model for self-organized dynamics and its
  flocking behavior}, Journal of Statistical Physics, Springer, 141 (5):
  923--947 (2011).

\bibitem{PR}
F.~Poupaud and M.~Rascle, \emph{Measure solutions to the linear multi-
  dimensional transport equation with non-smooth coefficients,}, Comm. Part.
  Diff. Eqns, 22, 337--358 (1997).

\bibitem{PS}
David Poyato and Juan Soler, \emph{{E}uler-type equations and commutators in
  singular and hyperbolic limits of kinetic {Cucker-Smale} models,},
  https://arxiv.org/abs/1611.00743 (2016).

\bibitem{SSZ}
J.~Silk, A.~Szalay, and Ya.~B. Zeldovich, \emph{Large-scale structure of the
  universe ,}, Scientific American, 249, 72Ð80, (1983).

\bibitem{TT}
E.~Tadmor and C.~Tan, \emph{Critical thresholds in flocking hydrodynamics with
  non-local alignment,}, Phil. Trans. Royal Soc. London A, 372, 20130401,
  (2014).

\bibitem{Va}
A.~Vasseur, \emph{Recent results on hydrodynamic limits ,}, In Handbook of
  differential equations: evolutionary equations. Vol. IV, Handb. Differ. Equ.,
  323--376. Elsevier/North-Holland, Amsterdam, (2008).

\bibitem{Vicsek}
T.~Vicsek, A.~Czir$\acute{\mbox{o}}$k, E.~Ben-Jacob, I.~Cohen, and O.~Shochet,
  \emph{Novel type of phase transition in a system of self-driven particles},
  Phys. Rev. Lett. 75, pp. 1226-1229 (1995).

\bibitem{Villani}
C.~Villani, \emph{Optimal transport, old and new}, Grundlehren des
  mathematischen Wissenschaften [Fundamental Principles os mathematical
  Sciences], Vol. 338, Springer-Verlag, Berlin-New York (2009).

\bibitem{WRS}
E.~Weinan, Yu.~G. Rykov, and Ya.~G. Sinai, \emph{Generalized variational
  principles, global weak solutions and behavior with random initial data for
  systems of conservation laws arising in adhesion particle dynamics,}, Comm.
  Math. Phys., 177, 349--380 (1996).

\bibitem{Z-E-P}
M.~Zavlanos, M.~Egerstedt, and G.~J. Pappas, \emph{Graph theoretic connectivity
  control of mobile robot networks}, Proceedings of the IEEE, 99(9):1525--1540
  (2011).

\bibitem{Ze}
Ya.~B. Zeldovich, \emph{Gravitational instability: An approximate theory for
  large density perturbations}, Astro. Astrophys., 5, 84 (1970).

\end{thebibliography}

\end{document}